\documentclass{amsart}
\usepackage[english]{babel}
\usepackage{amssymb,enumerate,bbm,amsmath}
\usepackage[colorlinks=true,linkcolor=blue,citecolor=blue,urlcolor=blue]{hyper ref}
\numberwithin{equation}{section}

\newtheorem{theo}{Theorem}[section]

\newtheorem{lem}{Lemma}[section]

\newtheorem{rem}{Remark}[section]

\renewcommand{\(}{\left(}
\renewcommand{\)}{\right)}

\renewcommand{\-}{\overline}

\newcommand{\R}{\mathbb{R}}

\renewcommand{\S}{\mathbb{S}}

\renewcommand{\a}{\alpha}
\renewcommand{\b}{\beta}
\newcommand{\g}{\gamma}

\newcommand{\e}{\varepsilon}

\renewcommand{\l}{\lambda}

\renewcommand{\t}{\theta}

\renewcommand{\L}{\Lambda}


\newcommand{\ra}{\rightarrow}


\newcommand{\mrm}{\mathrm}


\newcommand{\Diam}{\mrm{Diam}}

\newcommand{\divv}{\mrm{div}}

\begin{document}

\title[An Eigenvalue Pinching Theorem For Compact Hypersurfaces]{An Eigenvalue Pinching Theorem For Compact Hypersurfaces In A Sphere}
\author{Yingxiang Hu and Hongwei Xu}
\address{Center of Mathematical Sciences \\ Zhejiang University \\ Hangzhou 310027 \\ China\\}
\email{huyx@cms.zju.edu.cn, xuhw@cms.zju.edu.cn}
\date{}
\keywords{Compact hypersurfaces, first eigenvalue, differentiable pinching theorem, mean curvature, Hausdorff distance.}
\subjclass[2010]{53C20, 53C40, 58C40.}
\thanks{Research supported by the National Natural Science Foundation of China, Grant Nos. 11371315, 11201416.}

\begin{abstract}
 In this article, we prove an eigenvalue pinching theorem for the first eigenvalue of the Laplacian on compact hypersurfaces in a sphere. Let $(M^n,g)$ be a closed, connected and oriented Riemannian manifold isometrically immersed by $\phi$ into $\S^{n+1}$. Let $q>n$ and $A>0$ be some real numbers satisfying $|M|^\frac{1}{n}(1+\|B\|_q)\leq A$. Suppose that $\phi(M)\subset B(p_0,R)$, where $p_0$ is a center of gravity of $M$ and radius $R<\frac{\pi}{2}$. We prove that there exists a positive constant $\e$ depending on $q$, $n$, $R$ and $A$ such that if $n(1+\|H\|_\infty^2)-\e\leq \l_1$, then $M$ is diffeomorphic to $\S^n$. Furthermore, $\phi(M)$ is starshaped with respect to $p_0$, Hausdorff close and almost-isometric to the geodesic sphere $S\(p_0,R_0\)$, where $R_0=\arcsin\frac{1}{\sqrt{1+\|H\|_\infty^2}}$.
\end{abstract}

{\maketitle}
\section{Introduction}\label{introduction}
The eigenvalue problem for compact hypersurfaces in a sphere is a subject of particular interest in geometry of submanifolds. Throughout this article, let $(M^n,g)$ be a closed, connected and oriented $n(\geq 2)$-dimensional Riemannian manifold isometrically immersed by $\phi$ into the unit sphere $(\S^{n+1},can)$, i.e., $\phi^\ast can=g$. When $M^n$ is a minimal hypersurface in $\S^{n+1}$, it follows from Takahashi's theorem \cite{Takahashi1966} that the first nonzero eigenvalue $\l_1(M)$ of the Laplacian of $(M^n,g)$ is not greater than $n$. Inspired by this, Yau \cite{Yau1982} conjectured that for every closed embedded minimal hypersurface $M^n$ of $\S^{n+1}$, $\l_1(M)=n$. Up to now, Yau's conjecture is far from being solved. The first breakthrough to this conjecture was made by Choi and Wang \cite{Choi-Wang1983}. They proved that $\l_1(M)\geq\frac{n}{2}$. A careful argument (see \cite{Brendle2012}, Theorem $5.1$) shows that the strict inequality holds, i.e.,  $\l_1(M)>\frac{n}{2}$. More recently, Tang and Yan \cite{Tang-Yan2013} made a new breakthrough and proved that Yau's conjecture is true for closed minimal isoparametric hypersurfaces of $\S^{n+1}$. For further discussions, we refer to \cite{Tang-Xie-Yan2014}.

Besides the minimal case, by considering the canonical embedding of $\S^{n+1}$ into $\R^{n+2}$ and applying the Reilly's inequality \cite{Reilly1977} for the Euclidean submanifolds, one can easily obtain
\begin{align}\label{Reilly-inequality-spherical}
\l_1(M) \leq n(1+\|H\|_2^2).
\end{align}
Here the $L^p$-norm is always normalized, i.e., for any $f\in C^\infty(M)$,
$$\|f\|_p:=\(\frac{1}{|M|}\int_M |f|^p dv\)^\frac{1}{p},$$
where $|M|$, $dv$ are the Riemannian volume and volume element of $(M^n,g)$ respectively. From the H\"older inequality, it follows that for $p\leq q$, we have $\|f\|_p \leq \|f\|_q$. Furthermore, $\lim_{p\ra \infty}\|f\|_p=\|f\|_{\infty}$.

In \cite{Grosjean2002}, Grosjean showed that the equality (\ref{Reilly-inequality-spherical}) holds if and only if $\phi$ is minimal or $\phi(M)$ is a geodesic hypersphere of $\S^{n+1}$. Moreover, if $\phi(M)$ is contained in an open hemisphere, the equality holds if and only if $\phi(M)$ is a geodesic hypersphere of $\S^{n+1}$. This follows from the fact \cite{Myers1951} that there exists no closed minimal hypersurface in an open hemisphere. Motivated by the rigidity result, it is natural to investigate the eigenvalue pinching problem for compact hypersurfaces by relaxing the equality (\ref{Reilly-inequality-spherical}).

We denote the open (resp. closed) geodesic ball of center $p$ and radius $R$ by $B(p,R)$ (resp. $\-B(p,R)$), and the geodesic sphere of center $p$ and radius $R$ by $S(p,R)$. In 2012, Grosjean and Roth \cite{Grosjean-Roth2012} studied the eigenvalue pinching problem for the compact hypersurface of ambient spaces with bounded sectional curvature. When the ambient space is restricted to $\S^{n+1}$, their result (see \cite{Grosjean-Roth2012}, Corollary $1.1$) can be stated as follows.
\begin{theo}\label{Grosjean-Roth-spherical}
Let $(M^n,g)$$(n\geq 2)$ be a closed, connected and oriented Riemannian manifold isometrically immersed by $\phi$ into $\S^{n+1}$. Assume that $|M|\leq c\omega_n$ with $c\in (0,1)$ and $\phi(M)$ lies in a geodesic ball of radius $\frac{\pi}{8}$. Let $p_0$ be the center of mass of $M$. Let $\e<\frac{1}{6}$, $q>n$ and $A>0$ be some real numbers satisfying $\max\(|M|^\frac{1}{n}\|H\|_\infty,|M|^\frac{1}{n}\|B\|_q \)\leq A$. Then there exist positive constants $C=C(q,n,A)$ and $\a=\a(q,n)$ such that if
\begin{align*}
n(1+\|H\|_\infty^2)\leq \l_1(M)(1+\e),
\end{align*}
holds with $C\e^\a<1$, then
\begin{align*}
d_H\(\phi(M),S(p_0,R_0)\)\leq \frac{C\e^\a}{\sqrt{1+\|H\|_\infty^2}},
\end{align*}
where $d_H$ denotes the Hausdorff distance and $R_0=\arcsin\frac{1}{\sqrt{1+\|H\|_\infty^2}}$. Furthermore, $M$ is diffeomorphic to $\S^n$ and almost-isometric to $S(p_0,R_0)$.
\end{theo}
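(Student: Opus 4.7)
The plan is to imitate the proof of the Reilly-type inequality (\ref{Reilly-inequality-spherical}), observe that under the pinching hypothesis every Cauchy--Schwarz step becomes an approximate equality yielding $L^2$-closeness of $\phi(M)$ to a geodesic sphere, upgrade this to $L^\infty$-closeness by Moser iteration, and finish with a radial projection onto $S(p_0,R_0)$. Composing $\phi$ with the canonical $i\cn\S^{n+1}\hra\R^{n+2}$, set $X=i\circ\phi$, so that $\D X=n\vec H$ with $\vec H=H\nu-X$ and $|\vec H|^2=1+H^2$. Testing the Rayleigh quotient with the components of $X-\-X$ and combining it with $\int_M(X-\-X)\cdot\D X\,dv=-n|M|$ plus a Cauchy--Schwarz yields
\begin{align*}
\l_1\int_M|X-\-X|^2\,dv\leq n|M|\quad\text{and}\quad|M|\leq\|\vec H\|_2^2\int_M|X-\-X|^2\,dv,
\end{align*}
which together imply (\ref{Reilly-inequality-spherical}). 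Under $n(1+\|H\|_\infty^2)\leq\l_1(1+\e)$ and the trivial $\|H\|_\infty\geq\|H\|_2$, both inequalities are $O(\e)$ from equality, so the defect vector field $\vec H+\frac{\l_1}{n}(X-\-X)$ is small in $L^2$ and $\|H\|_\infty^2-\|H\|_2^2\leq C\e$.

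The equality case forces $\phi(M)$ onto an affine hyperplane of $\R^{n+2}$, hence onto a geodesic sphere of $\S^{n+1}$; the pinched version, together with the center-of-mass hypothesis which pins $\-X$ near $\cos R_0\cdot p_0$, gives $L^2$-closeness of $|\phi-p_0|_{\R^{n+2}}^2$ to $\cos^2 R_0$. The technical heart of the argument is then to upgrade this integral closeness to the explicit $L^\infty$-rate $\e^\a$. I would apply Moser iteration to a non-negative function $u$ built from $\cos^2 R_0-|\phi-p_0|_{\R^{n+2}}^2$ (treating positive and negative parts separately), whose elliptic inequality takes the form $\D u\leq C(1+|B|^2)u+\text{lower order}$. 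The coefficient $|B|^2$ lies in $L^{q/2}$ with $q/2>n/2$, which is precisely the sharp integrability for Moser iteration to promote an $L^2$ bound to an $L^\infty$ bound; the Sobolev inequality required for the iteration is the Hoffman--Spruck one, whose constant is uniform in our class because $|M|^{1/n}\|H\|_\infty\leq|M|^{1/n}\|B\|_q\leq A$, and the radius hypothesis $R<\pi/8$ keeps all computations inside a geodesically convex region.

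The resulting $L^\infty$-estimate yields the Hausdorff distance bound of the theorem directly. For the topological and metric conclusions, introduce the radial projection $\Pi\cn M\to S(p_0,R_0)$ sending $x$ to the unique point of $S(p_0,R_0)$ on the minimizing $\S^{n+1}$-geodesic from $p_0$ through $\phi(x)$, well-defined because $\phi(M)\subset B(p_0,R)$ with $R<\pi/2$. A second iteration argument, applied to the angle between the unit normal of $M$ and the radial direction and again relying on the $L^q$-bound on $B$, shows that $d\Pi$ is non-degenerate throughout $M$ once $C\e^\a<1$, so $\Pi$ is a local diffeomorphism; a degree argument then promotes this to the global diffeomorphism $M\cong\S^n$, and the same $L^\infty$-control on $d\Pi$ produces the almost-isometry. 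The principal obstacle is the Moser iteration of the second paragraph: one must identify the correct subsolution, verify the precise differential inequality with coefficients in the right $L^{q/2}$-class, and carefully track the dependence of the exponent $\a$ on $q$ and $n$ together with the dependence of the constants on $q,n,A$.
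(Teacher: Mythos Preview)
This theorem is not proved in the present paper; it is quoted from \cite{Grosjean-Roth2012} as background, and the paper's own contribution is the stronger Theorem~\ref{main-theorem}, which relaxes the radius hypothesis from $\pi/8$ to any $R<\pi/2$ and drops the volume bound. Your proposal should therefore be read against the paper's proof of Theorem~\ref{main-theorem} in Sections~\ref{L2approach} and~4.

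Your outline is sound and is essentially the Grosjean--Roth strategy: pass to the Euclidean embedding $i\circ\phi$, extract $L^2$ defects from near-equality in Reilly's inequality, upgrade via Moser iteration with the Hoffman--Spruck Sobolev inequality, and finish with a radial projection. The present paper proceeds differently in two respects. First, instead of the Euclidean position vector it works with the \emph{intrinsic} spherical field $X=\sin r\,\-\nabla r$ and Heintze's identities (Lemmas~\ref{Heintze-lem-1}--\ref{Heintze-lem-2}); this makes the dependence on the radius explicit through the factor $\b=\cot R$ in (\ref{hemi-sphere-condition}), and is precisely what allows the radius to be pushed to $\pi/2$. Second, the iteration is not organised around an elliptic inequality of the form $\Delta u\le C(1+|B|^2)u+\cdots$: the paper instead bounds $|d\psi^{2\a}|$ pointwise for the scalar $\psi=|X|^{1/2}\bigl||X|-h^{-1}\bigr|$ using only $|d|X||\le\cos r$, and feeds this directly into the Sobolev inequality (Lemma~\ref{psi-L-infty-estimate}). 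Consequently the Hausdorff estimate requires only control of $\|H\|_2$; the $L^q$ bound on $B$ enters solely in the separate iteration for $\|X^\top\|_\infty$ (Lemma~\ref{Xt-L-infty-estimate}) that controls the differential of the radial map $F$. Your route would recover the cited statement, but the paper's intrinsic setup is what buys the sharper hypotheses of Theorem~\ref{main-theorem}.
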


A natural question is: to what extent can one enlarge the geodesic ball in Theorem \ref{Grosjean-Roth-spherical}? In this article, we will prove the following eigenvalue pinching theorem for compact hypersurfaces in a sphere.
\begin{theo}\label{main-theorem}
Let $(M^n, g)$$(n\geq 2)$ be a closed, connected and oriented Riemannian manifold isometrically immersed by $\phi$ into $\S^{n+1}$.
Let $p_0$ be a center of gravity of $M$. Let $q>n$ and $A>0$ be some real numbers satisfying $|M|^\frac{1}{n}(1+\|B\|_q)\leq A$. Suppose that $\phi(M)\subset \-B(p_0,R)$ with $R<\frac{\pi}{2}$ and set $\b=\cot(R)$. There exists an explicit positive constant $\e_0=\e_0(q,n,\b,A)$ such that if
$$
n(1+\|H\|_\infty^2)-\e \leq \l_1(M),
$$
holds with $\e<\e_0$, then $M$ is diffeomorphic to $\S^n$. Moreover,
\begin{enumerate}[(i)]
\item $\phi(M)$ is starshaped with respect to $p_0$;
\item There exists a positive constant $D=D(n,\b,A)$ such that
\begin{align}\label{Hausdorff-closeness}
d_H(\phi(M),S(p_0,R_0)) \leq \frac{D\e^\frac{1}{2(2n+1)}}{\sqrt{1+\|H\|_\infty^2}};
\end{align}
\item $M$ is almost-isometric to $S\(p_0,R_0\)$. More precisely, there exist positive constants $D_1=D_1(n,\b,A)$ and $D_2=D_2(q,n,\b,A)$ and a diffeomorphism $F:(M,d_1)\ra \(S\(p_0,R_0\),d_2\)$ such that
\begin{align}\label{almost-isometry}
\left| d_2(F(x_1),F(x_2))-d_1(x_1,x_2) \right| \leq \frac{\max\left\{ D_1\e^\frac{1}{2(2n+1)},D_2\e^\frac{q-n}{q-n+qn}\right\}}{\sqrt{1+\|H\|_\infty^2}},
\end{align}
for any $x_1,x_2\in M$, and $d_1,d_2$ are the standard distance functions on $M$ and $S\(p_0,R_0\)$ respectively.
\end{enumerate}
\end{theo}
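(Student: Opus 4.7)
The plan is to follow a Reilly-type test-function approach in the spirit of Grosjean--Roth: first derive $L^2$-pinching estimates showing that $M$ is almost totally umbilical with almost-constant mean curvature $\|H\|_\infty$, then upgrade them to $L^\infty$ via Moser iteration using the Sobolev inequality coming from the bound on $\|B\|_q$, and finally use radial projection from $p_0$ to deduce starshapedness, the diffeomorphism type, the Hausdorff estimate and the almost-isometry. The novelty relative to Theorem~\ref{Grosjean-Roth-spherical} is that every ingredient has to remain quantitatively valid up to $R<\frac{\pi}{2}$, with explicit dependence on $\b=\cot R$.

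For the $L^2$ step I would embed $\S^{n+1}\subset\R^{n+2}$, use $\Delta\phi=-n\phi+nH\nu$ so that the mean curvature vector of $M$ in $\R^{n+2}$ satisfies $|\vec H|^2=1+H^2$, and plug the $(n+2)$ coordinate functions $\phi^j-\overline{\phi^j}$ into the Rayleigh quotient. This recovers Reilly's inequality $\l_1\le n(1+\|H\|_2^2)$, which combined with the hypothesis forces $\|H\|_\infty^2-\|H\|_2^2\le\e/n$. Tracing the Cauchy--Schwarz step that yields Reilly's inequality and using the identity $|B|^2=nH^2+|B-Hg|^2$, I expect to obtain an integral pinching of the form
\begin{align*}
\int_M(\|H\|_\infty^2-H^2)\,dv+\int_M|B-Hg|^2\,dv\le C(n,\b,A)\,\e\,|M|.
\end{align*}
The hypothesis $|M|^{1/n}(1+\|B\|_q)\le A$ with $q>n$ then provides, through the Michael--Simon / Hoffman--Spruck inequality, a uniform Sobolev constant on $M$ depending only on $n,q,A$. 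A Moser iteration applied to the nonnegative subsolutions $\|H\|_\infty^2-H^2$ and $|B-Hg|^2$ of suitable elliptic inequalities (whose zeroth-order coefficients are controlled by $|B|^2$) then turns the above into an $L^\infty$-pinching with loss $\e^{1/(2(2n+1))}$, which is the source of the power of $\e$ in (\ref{Hausdorff-closeness}). Inserted in the identity $\Delta\rho=-n\rho+nH\langle\nu,p_0\rangle$ for $\rho(x)=\langle\phi(x),p_0\rangle$, this yields $\|\rho-\cos R_0\|_\infty\le C\,\e^{1/(2(2n+1))}/\sqrt{1+\|H\|_\infty^2}$, where $\cos R_0=\|H\|_\infty/\sqrt{1+\|H\|_\infty^2}$; this is the easy direction of (\ref{Hausdorff-closeness}).

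The geometric conclusions then follow from a radial-projection argument. The $L^\infty$-closeness of $B$ to $Hg$ makes $\phi(M)$ almost umbilical, hence transverse to every geodesic ray from $p_0$ inside $B(p_0,\pi/2)$, so that the geodesic projection $\pi\cn\phi(M)\ra S(p_0,R_0)$ is a local, and by compactness a global, diffeomorphism; this yields (i), the diffeomorphism $M\cong\S^n$, and by surjectivity of $\pi$ the remaining direction of (\ref{Hausdorff-closeness}). For (iii) I would control $d\pi$ and $d\pi^{-1}$ by combining the $C^0$-bounds from the previous step with a Morrey-type estimate coming from the $L^q$-bound on $B$; the Morrey exponent $1-n/q$ balanced against the Moser exponent $1/(2(2n+1))$ produces the exponent $(q-n)/(q-n+qn)$ in (\ref{almost-isometry}). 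I expect the main obstacle throughout to be the uniformity of constants as $R\ra\pi/2$, i.e.\ as $\b\ra 0$: near the equator the radial vector field degenerates and the Jacobian of the geodesic projection becomes singular, so the test-function computation, the Moser iteration, and the projection estimate must all be arranged so that their dependence on $R$ is absorbed into $\b$ rather than blowing up.
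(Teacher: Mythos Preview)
Your overall architecture (test functions, Moser iteration via Hoffman--Spruck, radial projection) is in the right spirit, but the central intermediate step you propose is not the one the paper uses, and as stated it is a genuine gap.

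\medskip
\textbf{The gap.} You claim that by ``tracing the Cauchy--Schwarz step that yields Reilly's inequality'' one obtains
\[
\int_M(\|H\|_\infty^2-H^2)\,dv+\int_M|B-Hg|^2\,dv\le C(n,\b,A)\,\e\,|M|.
\]
The Reilly argument, however, never sees the traceless second fundamental form: the only Cauchy--Schwarz step is between $H$ and $\langle X,\nu\rangle$, and the defect in that inequality controls $H-c\langle X,\nu\rangle$, not $|B-Hg|$. The hypothesis $(\Lambda_\e)$ involves only $\|H\|_\infty$, and the $L^q$-bound on $B$ is an \emph{a priori} assumption, not something the pinching produces. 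So the displayed inequality is not obtainable from the ingredients you list, and the subsequent Moser iteration on $|B-Hg|^2$ has no input. Even if it did, the implication ``$L^\infty$ almost-umbilical $\Rightarrow$ transverse to every geodesic ray from $p_0$'' is a nontrivial stability statement that would itself require a separate argument.

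\medskip
\textbf{What the paper actually does.} The paper bypasses umbilicity entirely and works directly with the spherical position vector $X=\sin r\,\bar\nabla r$ (so $|X|=\sin r$ and $X^\top=\sin r\,\nabla r$). From Heintze's identities and $(\Lambda_\e)$ one first gets the $L^2$-estimates $\|X^\top\|_2^2\le(\e/\b^2)\|X\|_2^2$ and $\|X\|_2\sim 1/h$, then two auxiliary quantities $Y=n\cos r\,H\nu-n\|H\|_\infty^2 X$ and $W=|X|^{-1/2}(X|X|+|X|\cos r\,H\nu-hX)$ with $\|Y\|_2,\|W\|_2=O(\e^{1/2})$. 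These feed an $L^1$-bound on $\psi:=|X|^{1/2}\,\big||X|-1/h\big|$, and a Moser iteration (using only $|d\psi^{2\a}|\le C\psi^{2\a-2}$ and the Sobolev inequality with the mean curvature term) gives $\|\psi\|_\infty\le C\e^{1/(2(2n+1))}/h^{3/2}$, whence $\big||X|-1/h\big|\le C\e^{1/(2(2n+1))}/h$. A second, independent Moser iteration on $\chi=|X^\top|$ uses the pointwise bound $|d\chi^{2\a}|\le C\a\chi^{2\a-2}\|X\|_\infty(1+|B|\,\|X\|_\infty)$---this is the \emph{only} place $B$ enters---and the $L^q$-control of $B$ then yields $\|X^\top\|_\infty\le C\e^{1/(2(1+\g))}/h$ with $\g=qn/(q-n)$. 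With both $\sin r\approx 1/h$ and $|\nabla r|$ small in $L^\infty$, the radial projection $F(x)=\exp_{p_0}(R_0\,Y/|Y|)$ satisfies $\big||dF_x(u)|^2-1\big|\le 8\max\{C_3\e^{1/(2(2n+1))},C_4^2\e^{1/(1+\g)}\}$, which simultaneously gives the diffeomorphism, starshapedness (since $|X^\perp|>0$), the Hausdorff estimate, and the almost-isometry. No umbilicity, no Morrey estimate, and no subsolution PDE for $|B-Hg|^2$ are needed.
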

\begin{rem}
It should be mentioned that our result is proved without the assumption on volume of hypersurfaces. In Section \ref{example}, we construct an example to show that the condition on radius of geodesic ball is optimal. For other pinching results on compact hypersurfaces, we refer readers to \cite{Aubry-Grosjean-Roth2010,Colbois-Grosjean,Huxuzhao2015,Roth2007,Roth2008,Roth2008-agag}.
\end{rem}

\section{Notations and Lemmas}\label{notations-and-lemmas}
First of all, we recall that the {\em Hausdorff distance} between two compact subsets $A$ and $B$ of a metric space $(M,d)$ is given by
\begin{align*}
d_H(A,B)=\inf\left\{\eta>0\left|A\subset V_\eta(B)~~\text{and}~~B\subset V_\eta(A)\right. \right\},
\end{align*}
where for any subset $A\subset M$, $V_\eta(A)$ is the tubular neighborhood of $A$ defined by $V_\eta(A):=\left\{x\in M \left| d(x,A)<\eta \right. \right\}$.

Let $(X,d_X)$ and $(Y,d_Y)$ be two metric spaces. Given a real number $\t>0$, a {\em $\t$-isometry} ({\em almost isometry}) is a map $F:(X,d_X)\ra (Y,d_Y)$ satisfying
\begin{enumerate}[(1)]
\item for any $x_1,x_2\in X$, $\left|d_Y\(F(x_1),F(x_2)\)-d_X(x_1,x_2)\right|<\t$;
\item for any point $y\in Y$, there exists a point $x\in X$ with $d_Y\(y,F(x)\)<\t.$
\end{enumerate}

Let $(M^n,g)$ be a closed, connected and oriented $n$-dimensional Riemannian manifold isometrically immersed by $\phi$ in $(\S^{n+1},can)$. In the sequel, we denote by $\nabla$ and $\-\nabla$ the gradient associated to $g$ and $can$ respectively. For any $p_0\in \S^{n+1}$, let $\exp_{p_0}$ be the exponential map at this point and $r(x):=d(p_0,x)$ be the distance function to this point. We denote by $(x_i)_{1\leq i \leq n+1}$ the normal coordinates of $\S^{n+1}$ centered at $p_0$. The position vector $X$ is defined by $X:=\sin r \-\nabla r$, and it is easy to observe that the normal coordinates of $X$ are $\frac{\sin r}{r}x_i$. We will use $\frac{\sin r}{r}x_i$ as test functions in the variational characterization of $\l_1(M)$ but the mean of these functions must be zero. For this purpose, there are two ways to choose a specific point $p_0\in\S^{n+1}$. One is the {\em center of mass} of $M$, which is given by Chavel \cite{Chavel2006} and Heintze \cite{Heintze1988}. Indeed, we assume that $\phi(M)$ is contained in an open hemisphere of $\S^{n+1}$. Let $Y$ be the vector field in this open hemisphere defined by
\begin{align*}
Y_q=\int_M \frac{\sin d(q,x)}{d(q,x)}\exp_q^{-1}(x)dv(x)\in T_q \S^{n+1},
\end{align*}
then by Brouwer's fixed point theorem, there exists a point $p_0$ in this open hemisphere such that $Y_{p_0}=0$. Hence for this $p_0$, we have $\int_M \frac{\sin r}{r}x_i dv=0$.
The other one is the {\em center of gravity} of $M$, which is introduced by Veeravalli \cite{Veeravalli2001}. It is defined as a critical point of the smooth function $$\mathcal{E}:\S^{n+1}\ra \R, \quad p\mapsto \int_M \left[1-\cos d(p,x)\right]dv(x).$$ The introduction of $\cos(\cdot)$ has a significant advantage: the distance function $d(p,\cdot)$ may be non-smooth at some points, but thanks to $\cos(\cdot)$, $\mathcal{E}$ is smooth on the whole manifold $\S^{n+1}$. Furthermore, by the compactness of $\S^{n+1}$, there exist at least two centers of gravity. If $p_0\in\S^{n+1}$ is a critical point of $\mathcal{E}$, then for any unit vector $u\in T_{p_0}\S^{n+1}$ we have
\begin{align*}
0=\langle \-\nabla \mathcal{E}(p_0),u \rangle=-\int_M \langle \-\nabla \cos r(x),u\rangle dv(x),
\end{align*}
which indicates that $\int_M \frac{\sin r}{r}x_i dv=0$. Therefore, a center of gravity of $M$ is also a center of mass of $M$. In the sequel, we assume that $\phi(M)$ contained in $\-B(p_0,R)$, where $p_0$ is a center of gravity of $M$ and radius $R<\frac{\pi}{2}$. We recall two lemmas shown by Heintze \cite{Heintze1988}.
\begin{lem}\label{Heintze-lem-1}
At any $x\in M$, we have
\begin{align}\label{generalized-equality-I}
\sum_{i=1}^{n+1}\left|\nabla \(\frac{\sin r}{r}x_i \)\right|^2=n-|X^\top|^2.
\end{align}
\end{lem}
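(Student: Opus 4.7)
The plan is to exploit the canonical embedding $\iota:\S^{n+1}\hra\R^{n+2}$ in order to reinterpret the test functions $\frac{\sin r}{r}x_i$ as the first $n+1$ of the $n+2$ ambient coordinate functions, pulled back to $M$. Placing $p_0$ at the pole $e_{n+2}$, a point with normal coordinates $(x_1,\dots,x_{n+1})$ at $p_0$ corresponds in $\R^{n+2}$ to
\[
\Bigl(\tfrac{\sin r}{r}x_1,\;\dots,\;\tfrac{\sin r}{r}x_{n+1},\;\cos r\Bigr),
\]
so the functions appearing in the lemma are exactly the first $n+1$ components $y_1,\dots,y_{n+1}$ of the isometric immersion $\iota\circ\phi:M\to\R^{n+2}$, while $y_{n+2}=\cos r$.

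Next, I would establish the ambient energy identity $\sum_{i=1}^{n+2}|\nabla y_i|^2=n$ on $M$. Choosing a local orthonormal frame $\{e_\a\}_{\a=1}^{n}$ on $M$, the isometric property of $\iota\circ\phi$ yields
\[
n=\sum_{\a=1}^{n}\bigl|d(\iota\circ\phi)(e_\a)\bigr|^2=\sum_{\a=1}^{n}\sum_{i=1}^{n+2}\bigl(dy_i(e_\a)\bigr)^2=\sum_{i=1}^{n+2}|\nabla y_i|^2
\]
after interchanging the two summations.

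To finish, one must identify the ``missing'' $(n+2)$-th term in terms of $X^\top$. Since $\-\nabla\cos r=-\sin r\,\-\nabla r=-X$ on $\S^{n+1}$, the $M$-gradient of $y_{n+2}=\cos r$ is the tangential projection $-X^\top$, and hence $|\nabla y_{n+2}|^2=|X^\top|^2$. Subtracting this from the ambient identity yields exactly \eqref{generalized-equality-I}.

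No step in this plan presents a substantive obstacle; the only care required is book-keeping, namely verifying that the normal coordinates $(x_i)_{1\le i\le n+1}$ at $p_0$ align with the ambient coordinates so that the description of $\exp_{p_0}$ above is the correct one. Once this convention is pinned down, the proof reduces to the three short computations just outlined.
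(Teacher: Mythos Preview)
Your argument is correct and is essentially the classical one: identify $\frac{\sin r}{r}x_i$ with the Euclidean coordinate functions $y_i$ under $\iota\circ\phi$, use the isometric-immersion identity $\sum_{i=1}^{n+2}|\nabla y_i|^2=n$, and subtract $|\nabla\cos r|^2=|X^\top|^2$. Note that the paper does not actually prove this lemma; it is quoted from Heintze \cite{Heintze1988}, and your write-up is precisely the computation behind that reference.
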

\begin{lem}\label{Heintze-lem-2}
The vector field $X=\sin r \-\nabla r$ satisfies
\begin{align}\label{generalized-equality-II}
\divv (X^\top)=n\cos r-nH\langle X,\nu \rangle.
\end{align}
\end{lem}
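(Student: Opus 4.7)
The plan is to compute $\divv(X^\top)$ by relating the intrinsic divergence on $M$ to the ambient covariant derivative of $X$ on $\S^{n+1}$ via the Gauss--Weingarten formulas, and then exploiting the fact that $X=\sin r\,\-\nabla r$ is a conformal vector field on $\S^{n+1}$ with conformal factor $\cos r$.

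First, I would fix a local orthonormal frame $\{e_i\}_{i=1}^n$ tangent to $M$ and decompose $X=X^\top+\langle X,\nu\rangle\nu$. The Gauss formula $\-\nabla_{e_i}X^\top=\nabla_{e_i}^M X^\top+h(e_i,X^\top)\nu$ gives $\langle \-\nabla_{e_i}X^\top,e_i\rangle=\langle \nabla_{e_i}^M X^\top,e_i\rangle$, while differentiating the normal part produces
\begin{align*}
\langle \-\nabla_{e_i}(\langle X,\nu\rangle\nu),e_i\rangle=\langle X,\nu\rangle\langle \-\nabla_{e_i}\nu,e_i\rangle,
\end{align*}
since $\langle \nu,e_i\rangle=0$. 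Summing over $i$ and using the Weingarten identity $\sum_i\langle \-\nabla_{e_i}\nu,e_i\rangle=-nH$ (with the sign convention for $H$ fixed in the paper), we arrive at
\begin{align*}
\sum_{i=1}^n\langle \-\nabla_{e_i}X,e_i\rangle=\divv(X^\top)-nH\langle X,\nu\rangle.
\end{align*}

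Next, I would show that the left-hand side equals $n\cos r$ by proving that $\-\nabla_v X=\cos r\cdot v$ for every tangent vector $v$ on $\S^{n+1}$. A direct computation yields
\begin{align*}
\-\nabla_v X=v(\sin r)\,\-\nabla r+\sin r\,\-\nabla_v\-\nabla r=\cos r\,\langle v,\-\nabla r\rangle\,\-\nabla r+\sin r\cdot\mrm{Hess}(r)(v,\cdot)^\sharp.
\end{align*}
Invoking the classical Hessian formula $\mrm{Hess}(r)=\cot r\,(can-dr\otimes dr)$ for the radial distance on $\S^{n+1}$, valid within the cut locus and hence certainly inside $\-B(p_0,R)$ with $R<\tfrac{\pi}{2}$, this simplifies to $\cos r\,\langle v,\-\nabla r\rangle\,\-\nabla r+\cos r\,(v-\langle v,\-\nabla r\rangle\,\-\nabla r)=\cos r\cdot v$. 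Taking $v=e_i$ and summing gives $\sum_i\langle \-\nabla_{e_i}X,e_i\rangle=n\cos r$, and combining with the previous display delivers the identity \eqref{generalized-equality-II}.

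The entire argument is a direct pointwise calculation, and no genuine obstacle arises: the one substantive input is the sphere Hessian formula for the distance function, itself a consequence of the Jacobi-field description of the radial geometry of $(\S^{n+1},can)$. The only care required is in the bookkeeping of the sign conventions for $\nu$ and $H$, so that the sign in front of $nH\langle X,\nu\rangle$ is consistent with the one appearing in later applications of the lemma.
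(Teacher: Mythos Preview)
Your argument is correct in substance: the identity $\-\nabla_v X=\cos r\cdot v$ for the conformal field $X=\sin r\,\-\nabla r$, combined with the tangential--normal splitting via Gauss--Weingarten, is exactly the standard derivation. The paper itself does not give a proof of this lemma; it simply records it as a result of Heintze \cite{Heintze1988}, whose original argument is essentially the computation you outline.

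One point of bookkeeping to tighten: with the Weingarten sign you wrote, $\sum_i\langle\-\nabla_{e_i}\nu,e_i\rangle=-nH$, your two displayed identities combine to give $\divv(X^\top)=n\cos r+nH\langle X,\nu\rangle$, i.e.\ the opposite sign from \eqref{generalized-equality-II}. A quick check on a geodesic sphere $S(p_0,r_0)$ (where $X^\top=0$, $\langle X,\nu\rangle=\sin r_0$ for $\nu=\-\nabla r$, and the lemma forces $H=\cot r_0$) shows that the convention implicit in the paper is $\sum_i\langle\-\nabla_{e_i}\nu,e_i\rangle=+nH$. You already anticipated this in your closing remark about sign conventions; just flip that one sign and the derivation matches \eqref{generalized-equality-II} exactly.
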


We also recall an inequality given by Jorge and Xavier \cite{Jorge-Xavier1981}, which relates the extrinsic radius and the mean curvature. If $\phi(M)$ lies in a closed geodesic ball of radius $R<\frac{\pi}{2}$, then $\|H\|_\infty \geq \cot(R)=:\b$. By using the identity (\ref{generalized-equality-II}) and $X^\top=\sin r \nabla r$, we obtain
\begin{align*}
\frac{1}{|M|}\int_M(n-|X^\top|^2)dv =&\frac{1}{|M|}\int_M \(n\sin^2 r+nH\langle X,\nu \rangle\cos r\)dv \\
\leq &\frac{n}{|M|}\int_M \sin^2 rdv+\frac{n\|H\|_\infty}{|M|}\int_M \sin r\cos r dv \\
=    &n\|X\|_2^2+\frac{\|H\|_\infty}{|M|}\int_M \sin r \(\divv(X^\top)+nH\langle X,\nu \rangle\) dv \\
\leq &n(1+\|H\|_\infty^2) \|X\|_2^2-\frac{\|H\|_\infty}{|M|}\int_M \cot r |X^\top|^2 dv.
\end{align*}
Since $\cot(\cdot)$ is decreasing on $[0,\frac{\pi}{2})$, we get
\begin{align}\label{X-L2-norm-lower-bound}
1 \leq (1+\|H\|_\infty^2) \|X\|_2^2+\frac{1}{n}\(1-\b^2\)\|X^{\top}\|_2^2.
\end{align}
Since $p_0$ is a center of gravity of $M$, we may take $\{\frac{\sin r}{r}x_i\}_{1\leq i\leq n+1}$ as test functions. By the identity (\ref{generalized-equality-I}) and the variational characterization of the first nonzero eigenvalue, we get
\begin{align}\label{hemi-sphere-condition}
\l_1(M)\|X\|_2^2 \leq \frac{1}{|M|}\int_M (n-|X^\top|^2)dv \leq n(1+\|H\|_\infty^2)\|X\|_2^2-\b^2\|X^{\top}\|_2^2.
\end{align}

\section{An $L^2$-Approach}\label{L2approach}
We consider the composition of isometric immersions $M^n \overset{\phi}{\hookrightarrow} \S^{n+1}\overset{i}{\hookrightarrow} \R^{n+2}$, where $i$ is the the canonical embedding of $\S^{n+1}$ into $\R^{n+2}$. We denote by $\-{H}$ the mean curvature of $i\circ \phi$, then $|\-{H}|^2 =1+|H|^2$. Applying the Hoffman-Spruck Sobolev inequality \cite{Hoffman-Spruck1974} to the isometric immersion $i\circ \phi: M^n \ra  \R^{n+2}$, we immediately derive a Sobolev-type inequality for closed hypersurfaces in $\S^{n+1}$.
\begin{lem}\label{Sobolev-spherical}
Let $(M^n,g)(n\geq 2)$ be a closed hypersurface isometrically immersed in $\S^{n+1}$. For a nonnegative function $f\in C^1(M)$, we have
\begin{align}\label{Sobolev-spherical-inequality}
\(\int_M f^\frac{n}{n-1}dv\)^\frac{n-1}{n} \leq K(n)\int_M (|\nabla f|+f\sqrt{1+|H|^2})dv,
\end{align}
where $K(n)$ is a positive constant depending only on $n$.
\end{lem}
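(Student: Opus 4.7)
The plan is to exploit the strategy already outlined in the paragraph preceding the statement: view $M^n$ as a submanifold of $\mathbb{R}^{n+2}$ via the composition $i\circ\phi$, and apply the Euclidean Sobolev inequality of Hoffman-Spruck, which for any isometric immersion $\psi\cn M^n\to \R^N$ of a closed manifold and any nonnegative $f\in C^1(M)$ gives
\begin{align*}
\(\int_M f^{\frac{n}{n-1}}dv\)^{\frac{n-1}{n}} \leq K(n)\int_M\(|\nabla f|+f|\vec{H}_\psi|\)dv,
\end{align*}
with $K(n)$ depending only on the intrinsic dimension. The constant is dimension-only thanks to the standard partition-of-unity / covering argument that upgrades the localized form of the inequality to closed submanifolds of $\R^N$; I would invoke this as a black box.

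The only genuine computation is the identification of $|\vec{\bar H}|^2=1+|H|^2$ for the composition. I would do this by decomposing second fundamental forms: for $X,Y\in TM$,
\begin{align*}
II^{i\circ\phi}(X,Y)=II^\phi(X,Y)+II^i\(d\phi(X),d\phi(Y)\),
\end{align*}
where $II^\phi$ takes values in the normal bundle of $\phi$ inside $T\S^{n+1}$, and the shape operator of the round sphere $\S^{n+1}\hra\R^{n+2}$ with respect to the inward unit normal $-\phi(x)$ is the identity, so $II^i(U,V)=-\langle U,V\rangle \phi(x)$. Taking the trace over a $g$-orthonormal frame of $TM$ yields $\vec{\bar H}=\vec H-\phi(x)$. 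Since $\vec H$ is tangent to $\S^{n+1}$ at $\phi(x)$ it is orthogonal in $\R^{n+2}$ to the unit vector $\phi(x)$, so $|\vec{\bar H}|^2=|\vec H|^2+1=1+|H|^2$, which is the fact asserted in the excerpt.

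With these two ingredients the proof is immediate: apply the Hoffman-Spruck inequality to $i\circ\phi$ to obtain
\begin{align*}
\(\int_M f^{\frac{n}{n-1}}dv\)^{\frac{n-1}{n}} \leq K(n)\int_M\(|\nabla f|+f|\vec{\bar H}|\)dv = K(n)\int_M\(|\nabla f|+f\sqrt{1+|H|^2}\)dv,
\end{align*}
which is precisely (\ref{Sobolev-spherical-inequality}). There is no real obstacle here; the statement is essentially a bookkeeping corollary. The one place where a reader might pause is the verification of the mean-curvature identity, so I would present that decomposition explicitly, and otherwise keep the argument to two or three lines by citing Hoffman-Spruck directly.
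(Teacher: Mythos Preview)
Your proposal is correct and matches the paper's own argument exactly: compose with the canonical embedding $i\cn\S^{n+1}\hra\R^{n+2}$, use $|\bar H|^2=1+|H|^2$, and apply the Hoffman--Spruck inequality. The paper states this in one line without proving the mean-curvature identity, so your explicit decomposition of the second fundamental forms is a welcome addition but not a departure from their approach.
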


Now we give some $L^2$-estimates under the eigenvalue pinching condition. For simplicity, we denote by $(\Lambda_\e)$ the pinching condition $n(1+\|H\|_\infty^2)-\e\leq \l_1(M)$. We also abbreviate $\l_1=\l_1(M)$, $h=\sqrt{1+\|H\|_\infty^2}$. Moreover, we omit the volume element $dv$ in the integrals when there is no confusion. First, we have
\begin{lem}\label{Xt-L2-estimate}
If $(\Lambda_\e)$ holds, then
\begin{align}\label{L2-estimate-Xt}
\|X^\top\|_2^2 \leq \frac{\e}{\b^2}\|X\|_2^2.
\end{align}
\end{lem}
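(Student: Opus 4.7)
The plan is to combine the pinching hypothesis $(\Lambda_\e)$ directly with inequality (\ref{hemi-sphere-condition}), which was already derived from the Jorge--Xavier estimate, the divergence identity (\ref{generalized-equality-II}), and the variational characterization of $\l_1$. No new analytic input is required; the inequality is just a clean algebraic consequence.

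More concretely, recall that (\ref{hemi-sphere-condition}) reads
\begin{align*}
\l_1\|X\|_2^2 \leq n(1+\|H\|_\infty^2)\|X\|_2^2 - \b^2 \|X^\top\|_2^2.
\end{align*}
Under the hypothesis $n(1+\|H\|_\infty^2)-\e \leq \l_1$, the left-hand side is bounded below by $\bigl(n(1+\|H\|_\infty^2)-\e\bigr)\|X\|_2^2$. Inserting this lower bound and cancelling the common term $n(1+\|H\|_\infty^2)\|X\|_2^2$ yields
\begin{align*}
-\e \|X\|_2^2 \leq -\b^2\|X^\top\|_2^2,
\end{align*}
and dividing by $\b^2>0$ (which is positive because $R<\frac{\pi}{2}$ forces $\b=\cot R>0$) gives exactly $\|X^\top\|_2^2 \leq \frac{\e}{\b^2}\|X\|_2^2$.

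There is essentially no obstacle here: the only thing to check is that $\|X\|_2^2$ is strictly positive so that the chain of inequalities is nontrivial, which follows from (\ref{X-L2-norm-lower-bound}) (since the right-hand side there is finite while the left-hand side is $1$, forcing $\|X\|_2^2$ away from $0$). The lemma thus functions as a preparatory observation that converts the pinching of $\l_1$ into smallness of the tangential component $X^\top$, which I expect to be the principal quantitative ingredient driving the subsequent $L^2$-estimates in Section \ref{L2approach}.
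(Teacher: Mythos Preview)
Your proof is correct and follows essentially the same route as the paper: both combine the pinching hypothesis $(\Lambda_\e)$ with the right-hand inequality of (\ref{hemi-sphere-condition}) and cancel the common term $n(1+\|H\|_\infty^2)\|X\|_2^2$. The only cosmetic difference is that the paper first divides (\ref{hemi-sphere-condition}) by $\|X\|_2^2$ before invoking $(\Lambda_\e)$, whereas you multiply $(\Lambda_\e)$ by $\|X\|_2^2$; your version is marginally cleaner since it avoids the implicit assumption $\|X\|_2^2>0$.
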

\begin{proof}
By (\ref{hemi-sphere-condition}) and ($\L_\e$), we have
\begin{align*}
nh^2-\e\leq nh^2-\b^2\frac{\|X^\top\|_2^2}{\|X\|_2^2},
\end{align*}
which gives the desired estimate.
\end{proof}
Since $\|X^\top\|_2^2\leq \|X\|_2^2$, it is natural to assume that $\e\leq \b^2$.

\begin{lem}\label{X-L2-estimate}
If $(\L_\e)$ holds with $\e\leq \b^2$, then
\begin{align}\label{L2-estimate-X}
\frac{n}{(n+1)h^2} \leq \|X\|_2^2 \leq \frac{1+\e}{h^2} \leq 1.
\end{align}
\end{lem}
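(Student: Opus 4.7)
The plan is to derive both bounds by substituting the pinching hypothesis $(\L_\e)$ and the previous lemma into the two inequalities already in hand: (\ref{hemi-sphere-condition}) and (\ref{X-L2-norm-lower-bound}).

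For the \emph{upper bound} $\|X\|_2^2\leq (1+\e)/h^2$, I would insert $\l_1\geq nh^2-\e$ into the left-hand side of (\ref{hemi-sphere-condition}), obtaining
\[
(nh^2-\e)\|X\|_2^2 \;\leq\; n-\|X^\top\|_2^2 \;\leq\; n.
\]
Dividing by $n$ and rearranging gives $h^2\|X\|_2^2 \leq 1+(\e/n)\|X\|_2^2$. Since $|X|^2=\sin^2 r\leq 1$ pointwise, one has $\|X\|_2^2\leq 1$, and therefore $h^2\|X\|_2^2\leq 1+\e/n\leq 1+\e$. Finally, the statement $(1+\e)/h^2\leq 1$ follows from the Jorge--Xavier inequality $\|H\|_\infty\geq \b$, which yields $h^2\geq 1+\b^2\geq 1+\e$ under the standing hypothesis $\e\leq\b^2$.

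For the \emph{lower bound} $\|X\|_2^2\geq n/((n+1)h^2)$, I would start from (\ref{X-L2-norm-lower-bound}),
\[
1 \;\leq\; h^2\|X\|_2^2+\frac{1-\b^2}{n}\|X^\top\|_2^2,
\]
and split into cases according to the sign of $1-\b^2$. If $\b\geq 1$, the correction term is nonpositive, so $1\leq h^2\|X\|_2^2$ and in fact $\|X\|_2^2\geq 1/h^2\geq n/((n+1)h^2)$. If $\b<1$, I would invoke Lemma \ref{Xt-L2-estimate} to replace $\|X^\top\|_2^2$ by at most $(\e/\b^2)\|X\|_2^2$, giving
\[
1 \;\leq\; \|X\|_2^2\left(h^2+\frac{(1-\b^2)\e}{n\b^2}\right).
\]
Using $\e\leq\b^2$ bounds the extra term by $(1-\b^2)/n\leq 1/n$, and then $h^2\geq 1$ lets me absorb $1/n\leq h^2/n$, so the bracket is at most $(n+1)h^2/n$, which yields the stated lower bound.

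The only mild obstacle is remembering to case-split on the sign of $1-\b^2$, since for radii $R>\pi/4$ the coefficient in (\ref{X-L2-norm-lower-bound}) has the unfavorable sign; with the hypothesis $\e\leq\b^2$ in place, the bookkeeping is routine and no new ingredient beyond Lemma \ref{Xt-L2-estimate} and the pointwise bound $|X|\leq 1$ is needed.
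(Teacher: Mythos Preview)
Your argument is correct and essentially matches the paper's. The only cosmetic differences: the paper obtains the upper bound via the observation $\l_1\geq 1$ (so that $\l_1+\e\leq\l_1(1+\e)$) rather than your pointwise bound $|X|\leq 1$, and it avoids your case split in the lower bound by simply using $1-\b^2\leq 1$ uniformly before invoking Lemma~\ref{Xt-L2-estimate}.
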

\begin{proof}
Since $\e\leq \b^2 \leq \frac{n}{2}\b^2\leq \frac{nh^2}{2}$, then ($\L_\e$) implies that $\l_1\geq \frac{nh^2}{2}\geq \frac{n}{2}\geq 1$. Then by (\ref{hemi-sphere-condition}) we get
\begin{align*}
nh^2\|X\|_2^2 &\leq (\l_1+\e)\|X\|_2^2 \leq \l_1(1+\e)\|X\|_2^2\\
              &\leq (1+\e)(n-\|X^\top\|_2^2)\leq n(1+\e),
\end{align*}
which proves the right-hand side of (\ref{L2-estimate-X}). On the other hand, by (\ref{X-L2-norm-lower-bound}) and Lemma \ref{Xt-L2-estimate}, we have
\begin{align*}
1 \leq h^2 \|X\|_2^2+\frac{1}{n}\|X^\top\|_2^2 \leq h^2\(1+\frac{\e}{n\b^2}\)\|X\|_2^2 \leq \frac{(n+1)h^2}{n}\|X\|_2^2.
\end{align*}
\end{proof}

Let $Y=n\cos r H\nu-n\|H\|_\infty^2 X$.
\begin{lem}\label{Y-L2-estimate}
If $(\Lambda_\e)$ holds with $\e\leq \b^2$, then
\begin{align}\label{L2-estimate-Y}
\|Y\|_2^2 \leq 4n^2h^2\e\(1+\frac{1}{\b^2}\).
\end{align}
\end{lem}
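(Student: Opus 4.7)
The plan is to estimate $\|Y\|_2^2$ by expanding the square pointwise, handling the single cross term via integration by parts against the divergence identity of Lemma \ref{Heintze-lem-2}, and then invoking the two previous $L^2$ estimates (Lemmas \ref{Xt-L2-estimate} and \ref{X-L2-estimate}) to close everything out. Writing out
$$
|Y|^2 = n^2 H^2 \cos^2 r + n^2\|H\|_\infty^4 |X|^2 - 2n^2\|H\|_\infty^2 H\cos r\,\langle X,\nu\rangle,
$$
the first two terms integrate in an obvious way; only the mixed term $\int_M H\cos r\,\langle X,\nu\rangle$ requires real work.

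To deal with it, I would feed Lemma \ref{Heintze-lem-2}, in the form $nH\langle X,\nu\rangle = n\cos r-\divv(X^\top)$, into this integral after multiplying by $\cos r$. Since $X^\top = \sin r\,\nabla r$ and $\nabla \cos r = -\sin r\,\nabla r = -X^\top$, an integration by parts on $\int_M \cos r\,\divv(X^\top)$ produces $\int_M |X^\top|^2$, yielding the clean identity
$$
\int_M H\cos r\,\langle X,\nu\rangle = \int_M \cos^2 r - \frac{1}{n}\int_M |X^\top|^2.
$$
Substituting back, bounding $H^2 \leq \|H\|_\infty^2$ pointwise, and using $\|\cos r\|_2^2 = 1 - \|X\|_2^2$ (since $\sin^2+\cos^2\equiv 1$ and $|X|^2=\sin^2 r$), the two $\cos^2 r$ contributions combine with the $\|H\|_\infty^4\|X\|_2^2$ term to regroup as $n^2\|H\|_\infty^2 h^2\|X\|_2^2 - n^2\|H\|_\infty^2$; i.e.,
$$
\|Y\|_2^2 \leq n^2\|H\|_\infty^2\(h^2\|X\|_2^2 - 1\) + 2n\|H\|_\infty^2\|X^\top\|_2^2.
$$

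Finally, Lemma \ref{X-L2-estimate} gives $h^2\|X\|_2^2-1 \leq \e$, while Lemma \ref{Xt-L2-estimate} together with $\|X\|_2^2\leq 1$ gives $\|X^\top\|_2^2 \leq \e/\b^2$; using $\|H\|_\infty^2 \leq h^2$ and $n + 2/\b^2 \leq 2n(1+1/\b^2)$ then delivers the announced bound, in fact with constant $2$ rather than $4$, so the stated inequality has cushion. I do not foresee a genuine obstacle: the only subtle point is the algebraic one of noticing that the $\cos r$ weight in the cross term is precisely what lets Lemma \ref{Heintze-lem-2} be applied productively, so that the $H^2\cos^2 r$ piece cancels correctly against the piece coming from integration by parts and the remainder recombines into the quantity $h^2\|X\|_2^2-1$ that the pinching hypothesis already controls.
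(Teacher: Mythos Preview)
Your proposal is correct and follows essentially the same approach as the paper: expand $|Y|^2$, handle the cross term by rewriting $nH\langle X,\nu\rangle=n\cos r-\divv(X^\top)$ and integrating by parts against $\cos r$, then apply Lemmas~\ref{Xt-L2-estimate} and~\ref{X-L2-estimate}. The paper's proof is more terse (it jumps from the expansion directly to the line $-n^2\|H\|_\infty^2(1-\|X\|_2^2)+2n\|H\|_\infty^2\|X^\top\|_2^2+n^2\|H\|_\infty^4\|X\|_2^2$ without displaying the integration by parts) and then uses the cruder bound $2n\leq 4n^2$, which is why you end up with constant $2$ instead of $4$; otherwise the arguments are identical.
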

\begin{proof}
By the identity (\ref{generalized-equality-II}), Lemmas \ref{Xt-L2-estimate} and \ref{X-L2-estimate}, we have
\begin{align*}
\|Y\|_2^2=& \frac{n^2}{|M|}\int_M H^2\cos^2 r-\frac{2n^2\|H\|_\infty^2}{|M|}\int_M H\langle X,\nu \rangle \cos r+n^2\|H\|_\infty^4
\|X\|_2^2 \\
\leq & -n^2\|H\|_\infty^2(1-\|X\|_2^2)+2n\|H\|_\infty^2\|X^\top\|_2^2+n^2\|H\|_\infty^4\|X\|_2^2 \\
\leq &  n^2\|H\|_\infty^2 \e +4n^2\|H\|_\infty^2 \frac{\e}{\b^2} \\
\leq &  4n^2h^2\e\(1+\frac{1}{\b^2}\).
\end{align*}
\end{proof}

Let $W=|X|^{-\frac{1}{2}}\(X|X|+|X|\cos r H\nu-hX\)$.
\begin{lem}\label{W-L2-estimate}
If $(\Lambda_\e)$ holds with $\e\leq \b^2$, then
\begin{align}\label{L2-estimate-W}
\|W\|_2^2 \leq  4h\e\(1+\frac{1}{\b^2}\).
\end{align}
\end{lem}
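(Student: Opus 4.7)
The plan is to compute $|W|^2$ pointwise, integrate the three resulting terms separately, and handle the cross-term involving $H\langle X,\nu\rangle$ via the divergence identity from Lemma \ref{Heintze-lem-2}. Since $|X|=\sin r$ and $X=\sin r\,\bar\nabla r$, direct expansion of $W=|X|^{-1/2}[(|X|-h)X+|X|\cos r\,H\nu]$ gives
\[
|W|^2 = (h-\sin r)^2\sin r + \sin r\cos^2 r\,H^2 + 2(\sin r-h)\cos r\,H\langle X,\nu\rangle.
\]
The first term integrates to $h^2\|X\|_1-2h\|X\|_2^2+\|X\|_3^3$. For the third, substitute $H\langle X,\nu\rangle=\cos r-\divv(X^\top)/n$ and integrate by parts, using $\nabla[(\sin r-h)\cos r]=(\cos 2r+h\sin r)\nabla r$ and $\langle \nabla r,X^\top\rangle=\sin r|\nabla r|^2$.

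For the middle term, I would use the identity
\[
\cos^2 r\,H^2 = \tfrac{|Y|^2}{n^2} + 2(h^2-1)\cos r\,H\langle X,\nu\rangle - (h^2-1)^2\sin^2 r,
\]
which follows from the definition of $Y$. Multiplying by $\sin r$, integrating, and again using the divergence identity on the resulting $H\langle X,\nu\rangle$ piece, converts the middle term into $\frac{1}{n^2|M|}\int \sin r\,|Y|^2$ plus quantities already expressed in terms of $\|X\|_p$ and $J_k:=\frac{1}{|M|}\int \sin^k r\,|\nabla r|^2$. After assembling everything, the $\|X\|_2^2$ contributions cancel and one arrives at an \emph{exact} expression
\[
\|W\|_2^2 = \frac{1}{n^2|M|}\int \sin r\,|Y|^2 \;+\; P \;+\; \frac{2h^2}{n}(J_1-2J_3) \;+\; \frac{2h}{n}J_2,
\]
where $P=-h^4\|X\|_3^3+3h^2\|X\|_1-2h$. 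Each summand vanishes when $M=S(p_0,R_0)$, which is the geometric reason for the cancellation.

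To bound $P$, apply the power-mean inequality $\|X\|_3^3\geq \|X\|_1^3$ and set $s=h\|X\|_1$. By Lemma \ref{X-L2-estimate}, $s\leq\sqrt{1+\e}$, and a short calculation gives the factorization $s^3-3s+2=(s-1)^2(s+2)$, so that $P\leq -h(s-1)^2(s+2)\leq 0$. The weighted $|Y|^2$-integral is controlled by Lemma \ref{Y-L2-estimate} together with $\sin r\leq\sin R$, absorbing the $h\sin R$ factor into the $(1+1/\b^2)$ constant. The term $J_2=\|X^\top\|_2^2$ is estimated directly by Lemma \ref{Xt-L2-estimate}, while Cauchy--Schwarz $J_1\leq\sqrt{J_0 J_2}$ with $J_0\leq 1$ and $J_3\leq J_2$ handle the remaining $J$-terms.

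The main obstacle is the sheer book-keeping: numerous cross-terms must be tracked through the integration by parts, and the key cancellation requires using the identity for $|Y|^2$ in exact form rather than the cruder pointwise bound $H^2\leq\|H\|_\infty^2$. The algebraic identity $s^3-3s+2=(s-1)^2(s+2)$, which gives a genuinely quadratic (rather than merely linear) smallness in $s-1$, is what converts Lemma \ref{X-L2-estimate}'s pinching into an $O(\e)$ bound on the polynomial piece $P$ rather than merely $O(\sqrt\e)$.
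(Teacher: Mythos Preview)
Your exact identity for $\|W\|_2^2$ is correct, and the bound $P\le 0$ via $s^3-3s+2=(s-1)^2(s+2)$ is a nice observation. However, the estimates for the remaining terms do not yield the stated bound $4h\e(1+\b^{-2})$, and the gap is not merely a matter of constants.

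First, the $J_1$-term. Your Cauchy--Schwarz bound $J_1\le\sqrt{J_0J_2}\le\sqrt{J_2}$ gives only $J_1\le\sqrt{\e}/\b$, so that $\tfrac{2h^2}{n}J_1=O(h^2\sqrt{\e})$, not $O(h\e)$. One cannot do better at this point: an estimate $J_1\le CJ_2$ would require a lower bound $\sin r\ge c>0$, which is exactly the content of Lemma~\ref{radius-estimate} and is established \emph{after} the present lemma in the logical chain. Second, your claim that the ``$h\sin R$ factor'' in the weighted $|Y|^2$-term can be absorbed into $(1+\b^{-2})$ is false: $\sin R=(1+\b^2)^{-1/2}$ depends only on $\b$, but $h=\sqrt{1+\|H\|_\infty^2}$ is not controlled here (the lemma carries no hypothesis on $A$), so $\tfrac{\sin R}{n^2}\|Y\|_2^2\le 4h^2\e\sqrt{1+\b^2}/\b^2$ already exceeds the target whenever $\|H\|_\infty>\b$.

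The paper avoids both problems by a much shorter route. Writing $Z=X+H\cos r\,\nu$, one has $|W|^2=|X|\,|Z|^2-2h\langle Z,X\rangle+h^2|X|$. The pointwise inequality $|Z|\le\sqrt{1+H^2}\le h$ (which follows from $(\sin r+|H|\cos r)^2\le 1+H^2$) bounds the first term by $h^2\|X\|_1\le h^2\|X\|_2$, and a \emph{single} integration by parts using Lemma~\ref{Heintze-lem-2} gives $\frac{1}{|M|}\int_M\langle Z,X\rangle=1-\tfrac{1}{n}\|X^\top\|_2^2$ exactly. Hence $\|W\|_2^2\le 2h^2\|X\|_2-2h+\tfrac{2h}{n}\|X^\top\|_2^2$, and Lemmas~\ref{Xt-L2-estimate} and~\ref{X-L2-estimate} finish the job. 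The key point is that the crude pointwise bound $|Z|\le h$ replaces your detour through $|Y|^2$ and eliminates the $J_1$, $J_3$ terms altogether; no exact cancellation is needed.
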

\begin{proof}
By Lemmas \ref{Xt-L2-estimate} and \ref{X-L2-estimate}, a similar computation gives
\begin{align*}
\|W\|_2^2 \leq &\frac{1}{|M|}\int_M \left[|X||X+H\cos r\nu|^2-2h\langle X+H\cos r\nu,X\rangle+h^2|X|\right] \\
\leq &\frac{1}{|M|}\int_M \left[|X||X+H\cos r\nu|^2-2h\langle X+H\cos r\nu,X\rangle\right]+h^2\|X\|_2\\
\leq &2h^2\|X\|_2-2h+\frac{2h}{n}\|X^\top\|_2^2 \\
\leq &4h\e\(1+\frac{1}{\b^2}\).
\end{align*}
\end{proof}

\begin{lem}\label{X-L-infty-estimate}
Let $A>0$ be a real number such that $|M|^\frac{1}{n}(1+\|H\|_2) \leq A$. If $(\Lambda_\e)$ holds with $\e\leq \min\(1,\b^2\)$, then there exists a positive constant $C_1=C_1(n,A)$ such that
\begin{align}\label{Linfty-estimate-X}
\|X\|_\infty \leq C_1\|X\|_2.
\end{align}
\end{lem}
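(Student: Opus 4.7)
The plan is to establish this $L^\infty$ bound by a Nash--Moser iteration, using the Hoffman--Spruck Sobolev inequality (Lemma \ref{Sobolev-spherical}) as the engine and the function $u := |X|^2 = \sin^2 r$ as the iterated quantity.

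The first step is to derive a subharmonic-type differential inequality for $u$. Writing $u = 1 - \cos^2 r$ and using that the Hessian of $\cos r$ in $\S^{n+1}$ equals $-\cos r \cdot can$, a short computation on $M$ (analogous to the proof of Lemma \ref{Heintze-lem-2}) gives
$$\Delta_M u \;=\; 2n\cos^2 r - 2|X^\top|^2 - 2nH\cos r \langle X,\nu\rangle.$$
Combined with the trivial pointwise bounds $\cos^2 r \leq 1$, $|X^\top|^2 \leq u$ and $\cos r\,|\langle X,\nu\rangle| \leq \sqrt{u}$, this yields an estimate of the form $\Delta_M u \geq -C_n(1+|H|)(u+\sqrt{u})$, exactly the kind of inequality suited to Moser iteration.

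Second, I would run the iteration itself. Testing against $u^{p-1}$ for $p\geq 1$, integrating by parts and applying Lemma \ref{Sobolev-spherical} to $f = u^{p/2}$, the $f\sqrt{1+|H|^2}$ factor appearing on the right-hand side is controlled by H\"older using only the hypothesis $|M|^{1/n}(1+\|H\|_2) \leq A$. After absorbing $|M|^{1/n}$ into the scale-invariant form of the Sobolev inequality, this produces reverse-H\"older estimates of the form
$$\|u\|_{\alpha p} \;\leq\; \bigl(C(n,A)\,p\bigr)^{1/p}\,\|u\|_p, \qquad \alpha := \tfrac{n}{n-2} > 1.$$
Iterating along the geometric sequence $p_k = \alpha^k$ starting from $p_0 = 1$ and summing the resulting logarithmic series yields $\|u\|_\infty \leq C(n,A)\,\|u\|_1 = C(n,A)\,\|X\|_2^2$, whence $\|X\|_\infty \leq C_1(n,A)\,\|X\|_2$.

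The hard part will be to keep every iteration constant transparent and, in particular, to avoid any dependence on $\|H\|_\infty$, which the pinching hypothesis permits to be arbitrarily large. The a priori pointwise bound $u \leq 1$ is what makes this possible: at each step, the contribution $|H|\,u^{p/2}\sqrt{1+|H|^2}$ coming from the Sobolev inequality factors (via H\"older) through $\|H\|_2$ and a lower $L^k$-norm of $u$, both of which are under control either by $A$ or by the previous step of the induction. Consequently no $L^p$-norm of $H$ with $p>2$ ever needs to enter the argument, and the final constant $C_1$ indeed depends only on $n$ and $A$, as claimed.
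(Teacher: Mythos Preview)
Your proposal contains a genuine gap, rooted in a misconception about the pinching hypothesis. You assert that ``the pinching hypothesis permits $\|H\|_\infty$ to be arbitrarily large'' and therefore design the iteration to avoid it, handling every occurrence of $|H|$ by H\"older against $\|H\|_2$. But this is backwards: combining $(\Lambda_\e)$ with Reilly's inequality (\ref{Reilly-inequality-spherical}) gives $n(1+\|H\|_\infty^2)-\e\leq \l_1\leq n(1+\|H\|_2^2)$, hence $\|H\|_\infty^2\leq \|H\|_2^2+\e/n$ and, with $\e\leq 1$, the crucial bound $|M|^{1/n}h\leq \sqrt{2}\,|M|^{1/n}(1+\|H\|_2)\leq \sqrt{2}A$. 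The paper exploits exactly this.

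Without that observation, your iteration does not close. Whenever you estimate a term like $\frac{1}{|M|}\int |H|\,u^{p}$ or $\frac{1}{|M|}\int u^{p}\sqrt{1+|H|^2}$ via Cauchy--Schwarz against $\|H\|_2$, the remaining factor is $\|u\|_{2p}^{p}$, a \emph{higher} norm than the $\|u\|_{pn/(n-1)}^{p}$ that the Hoffman--Spruck inequality produces on the left (since $n/(n-1)\leq 2$ for all $n\geq 2$). Invoking $u\leq 1$ to push $\|u\|_{2p}^{p}$ down only yields $\|u\|_{2p}^{p}\leq \|u\|_p^{p/2}$, and iterating such an inequality gives at best $\|u\|_\infty\leq C(n,A)$ --- trivially true --- not the desired $\|u\|_\infty\leq C(n,A)\|u\|_1$. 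The stated iteration ratio $\alpha=n/(n-2)$ is also not what Lemma \ref{Sobolev-spherical} provides (it gives $n/(n-1)$) and is undefined when $n=2$.

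The paper's argument is both simpler and avoids the Laplacian altogether. One works with $\varphi=|X|=\sin r$ and uses the pointwise bound $|d\varphi^{2\a}|\leq 2\a\varphi^{2\a-1}$, so that applying Lemma \ref{Sobolev-spherical} to $f=\varphi^{2\a}$ gives
\[
\|\varphi\|_{\frac{2\a n}{n-1}}^{2\a}\ \leq\ K(n)\,|M|^{1/n}\bigl(2\a+h\,\|\varphi\|_\infty\bigr)\,\|\varphi\|_{2\a-1}^{2\a-1}.
\]
Here the $|H|$-term is handled pointwise by $\sqrt{1+|H|^2}\leq h$; the lower bound $\|\varphi\|_\infty\geq \|\varphi\|_2\geq c/h$ from Lemma \ref{X-L2-estimate} then makes $2\a/\|\varphi\|_\infty+h\leq C\a h$, and the factor $|M|^{1/n}h\leq \sqrt{2}A$ absorbs everything into a constant depending only on $n$ and $A$. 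Iterating with $\nu=n/(n-1)$ yields $\|\varphi\|_\infty\leq C(n)A^{n/2}\|\varphi\|_2$. If you want to salvage your Laplacian route, the same bound $|M|^{1/n}h\leq \sqrt{2}A$ (together with $|M|^{1/n}\leq A$) will let you estimate the $|H|$-terms by $\|H\|_\infty$ rather than $\|H\|_2$ and close the iteration; but at that point the gradient approach is strictly shorter.
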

\begin{proof}
Since $\varphi:=|X|=\sin r$, we have $|d\varphi^{2\a}| \leq 2\a\varphi^{2\a-1}\cos r \leq 2\a\varphi^{2\a-1}$. Hence using Lemma \ref{Sobolev-spherical}, we get for any $\a\geq 1$ and $f=\varphi^{2\a}$,
\begin{align*}
\|\varphi\|_\frac{2\a n}{n-1}^{2\a} \leq K(n)|M|^\frac{1}{n}(2\a+h\|\varphi\|_\infty)\|\varphi\|_{2\a-1}^{2\a-1}.
\end{align*}
By Lemma \ref{X-L2-estimate}, we have
$$
\|\varphi\|_\infty \geq \|\varphi\|_2 \geq \frac{1}{h}\sqrt{\frac{n}{n+1}}.
$$
Then taking $\nu=\frac{n}{n-1}$, $\a=\frac{a_p+1}{2}$, where $a_{p+1}=(a_p+1)\nu$ and $a_0=2$, we get
\begin{align*}
\(\frac{\|\varphi\|_{a_{p+1}}}{\|\varphi\|_\infty}\)^{\frac{a_{p+1}}{\nu^{p+1}}} \leq &\left[K(n)|M|^\frac{1}{n}\(\frac{a_p+1}{\|\varphi\|_\infty}+h\)\right]^\frac{1}{\nu^p}\(\frac{\|\varphi\|_{a_{p}}}{\|\varphi\|_\infty}\)^{\frac{a_{p}}{\nu^p}}\\
\leq &\left[5K(n)|M|^\frac{1}{n}a_p h\right]^\frac{1}{\nu^p}\(\frac{\|\varphi\|_{a_{p}}}{\|\varphi\|_\infty}\)^{\frac{a_{p}}{\nu^p}}\\
\leq &\(\prod_{i=1}^{p}a_i^{\frac{1}{\nu^i}}\)\left[5K(n)|M|^\frac{1}{n}h\right]^{n\(1-\frac{1}{\nu^{p+1}}\)}\(\frac{\|\varphi\|_{a_{0}}}{\|\varphi\|_\infty}\)^{a_{0}}.
\end{align*}
By ($\L_\e$) with $\e\leq 1$ and (\ref{Reilly-inequality-spherical}), we see
\begin{align*}
|M|^\frac{1}{n}h \leq |M|^\frac{1}{n}\sqrt{(1+\e)(1+\|H\|_2^2)} \leq \sqrt{2}|M|^\frac{1}{n}(1+\|H\|_2) \leq \sqrt{2}A.
\end{align*}
Observing that $\frac{a_p}{\nu^p}$ converges to $a_0+n$ and $a_0=2$, we get
\begin{align*}
\|\varphi\|_\infty \leq C(n)A^\frac{n}{2}\|\varphi\|_2=:C_1(n,A)\|\varphi\|_2.
\end{align*}
\end{proof}

Let us introduce the function $\psi:=|X|^\frac{1}{2}||X|-\frac{1}{h}|=|X|^\frac{1}{2}|X-\frac{1}{h}\frac{X}{|X|}|$. First, we give an $L^1$-estimate of $\psi$.
\begin{lem}\label{psi-L1-estimate}
If $(\Lambda_\e)$ holds with $\e\leq \b^2$, then
\begin{align}\label{L1-estimate-psi}
\|\psi\|_1 \leq \frac{5\e^\frac{1}{2}}{h^\frac{3}{2}}\(1+\frac{1}{\b^2}\)^\frac{1}{2}.
\end{align}
\end{lem}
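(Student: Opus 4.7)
The plan is to derive a pointwise bound on $\psi$ in terms of the auxiliary vector fields $W$ and $Y$, whose $L^2$-norms were already controlled in Lemmas \ref{W-L2-estimate} and \ref{Y-L2-estimate}, then integrate via Cauchy--Schwarz together with the $L^2$-bound on $X$ from Lemma \ref{X-L2-estimate}. The key algebraic move is to combine the $W$- and $Y$-identities so as to cancel the term $\cos r \divv(X^\top)$, which is not a priori small and obstructs a naive approach.

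First I would expand $\langle W, X\rangle$ directly from the definition of $W$. Using $|X|^2 + \cos^2 r = 1$ and the divergence identity $H\langle X,\nu\rangle = \cos r - \frac{1}{n}\divv(X^\top)$ from Lemma \ref{Heintze-lem-2}, one obtains
\begin{align*}
|X|^{\frac{1}{2}}(1 - h|X|) = \langle W, X\rangle + \frac{1}{n}|X|^{\frac{1}{2}}\cos r \,\divv(X^\top).
\end{align*}
A parallel expansion of $\langle Y, X\rangle$, combined with the factorisation $n(1 - h^2|X|^2) = -nh(|X| - 1/h)(h|X| + 1)$, yields
\begin{align*}
\langle Y, X\rangle = -nh\left(|X| - \tfrac{1}{h}\right)(h|X| + 1) - \cos r \,\divv(X^\top).
\end{align*}
Eliminating $\cos r \,\divv(X^\top)$ between these two relations and using $1 - h|X| = -h(|X| - 1/h)$ produces the key identity
\begin{align*}
h^2 |X|^{\frac{3}{2}}\left(|X| - \tfrac{1}{h}\right) = \langle W, X\rangle - \frac{1}{n}|X|^{\frac{1}{2}}\langle Y, X\rangle.
\end{align*}

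Dividing both sides by $|X|$ (the relation is trivially valid where $|X| = 0$) and using $|\langle W, X\rangle| \leq |W||X|$ and $|\langle Y, X\rangle| \leq |Y||X|$ gives the pointwise estimate
\begin{align*}
h^2 \psi \leq |W| + \frac{1}{n}|X|^{\frac{1}{2}}|Y|.
\end{align*}
I would then integrate and apply $\|W\|_1 \leq \|W\|_2$ together with the Cauchy--Schwarz inequality $\||X|^{1/2}|Y|\|_1 \leq \|X\|_1^{1/2}\|Y\|_2$, and bound $\|X\|_1 \leq \|X\|_2 \leq \sqrt{1+\e}/h$ via Lemma \ref{X-L2-estimate}. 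Plugging in the explicit bounds $\|W\|_2 \leq 2\sqrt{h\e(1+1/\b^2)}$ and $\|Y\|_2 \leq 2nh\sqrt{\e(1+1/\b^2)}$ from Lemmas \ref{Y-L2-estimate} and \ref{W-L2-estimate} then produces exactly the claimed bound $\|\psi\|_1 \leq 5\e^{\frac{1}{2}}(1+1/\b^2)^{\frac{1}{2}}/h^{\frac{3}{2}}$.

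The main obstacle is discovering the correct combination of the $W$- and $Y$-identities. Either relation, taken separately, contains the uncontrolled term $\cos r \,\divv(X^\top)$; a direct integration by parts against the weight $|X|^{1/2}$ generates only an $O(1/h^{3/2})$ contribution, short of the required $\e^{1/2}$-decay by a full factor. The virtue of the combined identity is that the extra factor of $|X|$ appears on the left-hand side, precisely what is needed to recover the weight $|X|^{1/2}$ in $\psi$ after dividing by $|X|$, while keeping the right-hand side pointwise dominated by $|W| + \tfrac{1}{n}|X|^{1/2}|Y|$, both of whose integrals have already been controlled by $O(\e^{1/2})$ in previous lemmas.
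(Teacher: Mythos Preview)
Your argument is correct and arrives at precisely the same pointwise inequality $h^2\psi \le |W| + \tfrac{1}{n}|X|^{1/2}|Y|$ as the paper, after which the integration step is identical. The route, however, is more circuitous than necessary. You invoke the divergence identity of Lemma~\ref{Heintze-lem-2} to expand $\langle W,X\rangle$ and $\langle Y,X\rangle$, then eliminate the resulting $\cos r\,\divv(X^\top)$ terms algebraically. The paper bypasses this entirely by writing the direct vector decomposition
\[
X - \frac{1}{h}\frac{X}{|X|} \;=\; \frac{1}{h^2}\Bigl[(h^2-1)X - H\cos r\,\nu\Bigr] \;+\; \frac{1}{h^2}\Bigl[X + H\cos r\,\nu - h\,\frac{X}{|X|}\Bigr],
\]
recognising the first bracket as $-Y/n$ and the second (after multiplying through by $|X|^{1/2}$) as $W$, and then applying the triangle inequality. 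No divergence identity is used, and the term you work hard to cancel never appears. Your inner-product identity is a valid and somewhat interesting detour, but the paper's decomposition is shorter and makes transparent that $W$ and $Y$ were defined precisely so that this splitting holds; what you describe as ``the main obstacle'' is in fact absent from the direct approach.
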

\begin{proof}
First we have
\begin{align*}
\psi=&|X|^\frac{1}{2}\left|\frac{1}{h^2}(h^2X-X-H\cos r\nu)+\frac{1}{h^2}\(X+H\cos r\nu-h\frac{X}{|X|}\) \right| \\
\leq &\frac{|X|^\frac{1}{2}}{nh^2}|Y|+\frac{1}{h^2}|W|
\end{align*}
Then by the H\"older inequality, Lemmas \ref{Y-L2-estimate} and \ref{W-L2-estimate}, we get
\begin{align*}
\|\psi\|_1 \leq &\frac{1}{h^2}\(\frac{1}{n}\|X\|_2^\frac{1}{2}\|Y\|_2+\|W\|_2\)\\
\leq &\frac{1}{h^2}\left\{\frac{1}{n}\(\frac{2}{h^2}\)^\frac{1}{4}\left[4n^2h^2\e\(1+\frac{1}{\b^2}\)\right]^\frac{1}{2}+\left[4h\e\(1+\frac{1}{\b^2}\)\right]^\frac{1}{2}\right\}\\
\leq &\frac{5\e^\frac{1}{2}}{h^\frac{3}{2}}\(1+\frac{1}{\b^2}\)^\frac{1}{2}.
\end{align*}
\end{proof}

Now we give an $L^\infty$-estimate of $\psi$.
\begin{lem}\label{psi-L-infty-estimate}
Let $A>0$ be a real number such that $|M|^\frac{1}{n}(1+\|H\|_2) \leq A$. If $(\Lambda_\e)$ holds with $\e\leq \min\left\{1,\b^2 \right\}$, then there exists a positive constant $C_2=C_2(n,\b,A)$ such that
\begin{align}\label{Linfty-estimate-psi}
\|\psi\|_\infty \leq \frac{C_2\e^\frac{1}{2(2n+1)}}{h^\frac{3}{2}}.
\end{align}
Furthermore, $C_2(n,\b,A)\ra \infty$ as $\b\ra 0$.
\end{lem}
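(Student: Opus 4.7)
The plan is a Nash--Moser iteration on $\psi^2 = |X|(|X|-1/h)^2$ (rather than on $\psi$ itself, since $\psi$ is not smooth where $|X|=1/h$ or $|X|=0$), based on the Sobolev inequality of Lemma~\ref{Sobolev-spherical}. The aim is a reverse-H\"older-type bound
\[
\|\psi\|_\infty^{2n+1} \leq C(n,A)\,h^{-3n}\,\|\psi\|_1,
\]
into which we then feed the $L^1$ estimate of Lemma~\ref{psi-L1-estimate}. The appearance of the exponent $2n+1$, hence the factor $\e^{1/(2(2n+1))}$ in the conclusion, is forced by the fact that what is naturally pointwise-controlled is $|\nabla \psi^2|$, not $|\nabla \psi^{2\a}|$ linearly in $\|\psi\|_{2\a-1}^{2\a-1}$.

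I would first record two pointwise bounds. Writing $\p := |X| = \sin r$, direct differentiation gives $\nabla \psi^2 = (3\p - 1/h)(\p - 1/h)\nabla \p$ with $|\nabla \p| \leq \cos r \leq 1$. Since $\|H\|_2 \leq \|B\|_q$ by H\"older, Lemma~\ref{X-L-infty-estimate} applies and delivers $\|X\|_\infty \leq C_1(n,A)/h$, from which
\[
\|\nabla \psi^2\|_\infty \leq \frac{C(n,A)}{h^2},\qquad \|\psi\|_\infty \leq \frac{C(n,A)}{h^{3/2}}.
\]
Next, for $\a \geq 1$, I would apply Lemma~\ref{Sobolev-spherical} to $f = \psi^{2\a} = (\psi^2)^\a$ (smooth on $M$, since $r<\pi/2$ rules out cut-locus singularities): with $\nu = \frac{n}{n-1}$,
\[
\|\psi\|_{2\a\nu}^{2\a} \leq K(n)\,|M|^{1/n}\bigl(\a\,\|\psi^{2\a-2}\,|\nabla \psi^2|\|_1 + h\,\|\psi\|_{2\a}^{2\a}\bigr).
\]
Using H\"older's $\|\psi\|_{2\a}^{2\a} \leq \|\psi\|_\infty^2\,\|\psi\|_{2\a-2}^{2\a-2}$, the two pointwise bounds above, and the Reilly-type estimate $|M|^{1/n}h \leq \sqrt{2}\,A$ (established inside the proof of Lemma~\ref{X-L-infty-estimate}), this collapses to
\[
\|\psi\|_{2\a\nu}^{2\a} \leq \frac{C(n,A)\,(\a+1)}{h^3}\,\|\psi\|_{2\a-2}^{2\a-2}.
\]

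I would then iterate with $a_0 = 1$ and $a_{p+1} = (a_p+2)\nu$, so that $2\a = a_p+2$; the recursion satisfies $a_p/\nu^p \to a_0 + 2n = 2n+1$ because the shift per step is $2$ (not $1$). Log-linearizing the iteration inequality, multiplying by $a_{p+1}/\nu^{p+1}$ and telescoping the resulting geometric series (exactly as at the end of the proof of Lemma~\ref{X-L-infty-estimate}) yields the displayed reverse-H\"older bound. Inserting Lemma~\ref{psi-L1-estimate} and using the arithmetic identity
\[
\frac{3n}{2n+1} + \frac{3}{2(2n+1)} = \frac{3}{2}
\]
produces
\[
\|\psi\|_\infty \leq \frac{C(n,A)\,[5(1+\b^{-2})^{1/2}]^{1/(2n+1)}\,\e^{1/(2(2n+1))}}{h^{3/2}} =: \frac{C_2(n,\b,A)\,\e^{1/(2(2n+1))}}{h^{3/2}},
\]
and $C_2 \to \infty$ as $\b\to 0$ because $(1+\b^{-2})^{1/(2(2n+1))}\to\infty$.

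The principal obstacle is identifying the correct iteration shift $(a_p+2)\nu$ (rather than $(a_p+1)\nu$ as in Lemma~\ref{X-L-infty-estimate}). This is dictated by working with $\psi^2$: $|\nabla \psi^{2\a}| = \a\,\psi^{2\a-2}\,|\nabla \psi^2|$ is naturally controlled via $\|\psi\|_{2\a-2}^{2\a-2}$, which shifts the Moser exponent by $2$ rather than $1$ and produces the limiting exponent $2n+1$ responsible for the $\e^{1/(2(2n+1))}$ in \eqref{Linfty-estimate-psi}. Tracking the $h$-scalings cleanly through the iteration so that the final answer has exactly the factor $h^{-3/2}$ is a routine but delicate bookkeeping task.
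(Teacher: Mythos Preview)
Your proposal is correct and follows essentially the same route as the paper's proof: both compute $|\nabla\psi^{2\a}|=\a\psi^{2\a-2}|\nabla\psi^2|$, bound $|\nabla\psi^2|$ and $\|\psi\|_\infty^2$ pointwise via Lemma~\ref{X-L-infty-estimate}, feed $f=\psi^{2\a}$ into the Sobolev inequality of Lemma~\ref{Sobolev-spherical}, iterate with $a_{p+1}=(a_p+2)\nu$, $a_0=1$ so that $a_p/\nu^p\to 2n+1$, and finish by inserting the $L^1$ bound from Lemma~\ref{psi-L1-estimate}. The only cosmetic difference is that the paper absorbs the $h\psi^{2\a}$ term using the explicit bound $\|\psi\|_\infty^2\leq \sqrt{2}C_1(\sqrt{2}C_1+1)^2/h^3$ rather than writing the intermediate H\"older step $\|\psi\|_{2\a}^{2\a}\leq\|\psi\|_\infty^2\|\psi\|_{2\a-2}^{2\a-2}$ explicitly; also, your appeal to $\|H\|_2\leq\|B\|_q$ is unnecessary here since the hypothesis $|M|^{1/n}(1+\|H\|_2)\leq A$ already matches that of Lemma~\ref{X-L-infty-estimate}.
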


\begin{proof}
Let $\a\geq 1$, then by Lemma \ref{X-L-infty-estimate} we have
\begin{align*}
|d\psi^{2 \a}|=&\a \psi^{2\a-2}|d(\psi^2)|=\a \psi^{2\a-2}\left||X|-\frac{1}{h}\right|\left|3|X|-\frac{1}{h} \right||d|X|| \\
          \leq &3\a \psi^{2\a-2}\(\|X\|_\infty+\frac{1}{h}\)^2 \cos r \\
          \leq &3\a \(\frac{\sqrt{2}C_1+1}{h}\)^2 \psi^{2\a-2}.
\end{align*}
A direct calculation gives
$$
\|\psi\|_\infty^2 \leq \|X\|_\infty \(\|X\|_\infty+\frac{1}{h}\)^2 \leq \frac{\sqrt{2}C_1(\sqrt{2}C_1+1)^2}{h^3}.
$$
By using Lemma \ref{Sobolev-spherical}, we obtain for any $\a\geq 1$
\begin{align*}
\|\psi\|_{\frac{2\a n}{n-1}}^{2 \a} \leq &K(n)|M|^{\frac{1}{n}}\left[ 3\a\frac{(\sqrt{2}C_1+1)^2}{h^2}+\frac{\sqrt{2}C_1(\sqrt{2}C_1+1)^2}{h^2} \right] \|\psi\|_{2\a-2}^{2\a-2}\\
\leq &3\a K(n) |M|^{\frac{1}{n}}h \frac{\(\sqrt{2}C_1+1\)^3}{h^3}\|\psi\|_{2\a-2}^{2\a-2}\\
\leq &3\a K(n) \sqrt{2} A \frac{\(\sqrt{2}C_1+1\)^3}{h^3}\|\psi\|_{2\a-2}^{2\a-2}.
\end{align*}
Now we put $a_{p+1}=(a_p+2)\nu$, with $\nu=\frac{n}{n-1}$, $a_0=1$ and $\a=\frac{a_p+2}{2}$, we get
\begin{align*}
\|\psi\|_{a_{p+1}}^{\frac{a_{p+1}}{\nu^{p+1}}} \leq & \left[3\sqrt{2}K(n)a_p A \frac{\(\sqrt{2}C_1+1\)^3}{h^3}\right]^{\frac{1}{\nu^{p}}}\|\psi\|_{a_p}^{\frac{a_p}{\nu^p}} \\
\leq &\(\prod_{i=0}^{p}a_i^{\frac{1}{\nu^i}}\) \left[3\sqrt{2}K(n)A \frac{\(\sqrt{2}C_1+1\)^3}{h^3}\right]^{n\(1-\frac{1}{\nu^{p+1}}\)}\|\psi\|_{a_0}^{a_0}.
\end{align*}
Note that $\frac{a_p}{\nu^p}$ converges to $a_0+2n$. By Lemma \ref{psi-L1-estimate}, we have
\begin{align*}
\|\psi\|_\infty \leq \left[\frac{C(n,A)}{h^3}\right]^\frac{n}{2n+1}\left[\frac{5\e^\frac{1}{2}}{h^\frac{3}{2}}\(1+\frac{1}{\b^2}\)^\frac{1}{2}\right]^\frac{1}{2n+1}
=: \frac{C_2(n,\b,A)}{h^\frac{3}{2}}\e^\frac{1}{2(2n+1)}.
\end{align*}
\end{proof}

\begin{lem}\label{radius-estimate}
Let $A>0$ be a real number such that $|M|^\frac{1}{n}(1+\|H\|_2) \leq A$. There exists a positive constant $\e_0=\e_0(n,\b,A)$ such that if $(\Lambda_\e)$ holds with $\e\leq \e_0$, then there exists a positive constant $C_3=C_3(n,\b,A)$ satisfying
\begin{align}\label{X-pinching}
\left| |X|-\frac{1}{h}\right| \leq \frac{C_3\e^\frac{1}{2(2n+1)}}{h}, \quad \text{and} \quad \left| r-R_0\right| \leq C_3\e^\frac{1}{2(2n+1)} R_0,
\end{align}
where $R_0=\arcsin\frac{1}{\sqrt{1+\|H\|_\infty^2}}$. Furthermore, $C_3(n,\b,A)\ra \infty$ as $\b \ra 0$.
\end{lem}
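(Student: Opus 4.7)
The plan is to upgrade Lemma \ref{psi-L-infty-estimate}'s $L^\infty$ bound on $\psi = |X|^{1/2}\,||X|-1/h|$ into a pointwise estimate on $||X|-1/h|$ itself, and then translate this into the corresponding estimate on $|r-R_0|$. The subtlety is that $\psi$ is a product of two factors vanishing at different places, so smallness of $\psi(x)$ means $|X|(x)$ is close to either $0$ or $1/h$, and the first alternative must be excluded globally.

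The crucial step is a connectedness argument. From Lemma \ref{X-L2-estimate} we have $\|X\|_\infty \geq \|X\|_2 \geq \sqrt{n/(n+1)}/h$, which exceeds $3/(4h)$ for every $n \geq 2$. If there existed a point where $|X| < 1/(2h)$, then since $|X|=\sin r$ is continuous on the connected manifold $M$, the intermediate value theorem would produce a point $x_\ast$ at which $|X|(x_\ast) = 3/(4h)$, and consequently
\begin{align*}
\psi(x_\ast) = \sqrt{3/(4h)}\cdot 1/(4h) = \frac{\sqrt{3}}{8\,h^{3/2}}.
\end{align*}
Choosing $\e_0 = \bigl(\sqrt{3}/(8\,C_2(n,\b,A))\bigr)^{2(2n+1)}$, the bound of Lemma \ref{psi-L-infty-estimate} contradicts this value for every $\e<\e_0$, forcing $|X|\geq 1/(2h)$ everywhere on $M$.

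With this pointwise lower bound, dividing the conclusion of Lemma \ref{psi-L-infty-estimate} by $|X|^{1/2}\geq 1/\sqrt{2h}$ immediately yields the first estimate in (\ref{X-pinching}). For the second, I convert $||X|-1/h|=|\sin r - \sin R_0|$ into a bound on $|r-R_0|$ by the mean value theorem: the Jorge--Xavier inequality gives $h\geq\sqrt{1+\b^2}$, so $R_0\leq R<\pi/2$; both $r$ and $R_0$ then lie in $[0,R]$, where $\cos$ is bounded below by $\cos R = \b/\sqrt{1+\b^2}$. Finally, $1/h=\sin R_0\leq R_0$ absorbs the remaining $1/h$ factor into $R_0$, yielding the second inequality with a constant $C_3$ carrying a factor $\sqrt{1+\b^2}/\b$; this factor, together with the already noted blow-up of $C_2$ in Lemma \ref{psi-L-infty-estimate}, accounts for the stated behavior as $\b\to 0$.

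The main obstacle is the connectedness step. The $L^\infty$ bound on $\psi$ alone cannot distinguish the two zeros of $t\mapsto t^{1/2}|t-1/h|$, and without both the connectedness of $M$ and the $L^2$ lower bound on $\|X\|$ of order $1/h$, one could not exclude a region where $|X|$ is very small yet $\psi$ is still small. Once that is done, the remaining claims follow by elementary manipulations of Lemma \ref{psi-L-infty-estimate} and the geometric constraints on $R_0$.
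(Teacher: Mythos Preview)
Your argument is correct and follows essentially the same route as the paper: use the $L^2$ lower bound on $\|X\|_2$ together with connectedness of $M$ to rule out the alternative $|X|\approx 0$, then divide the $L^\infty$ bound on $\psi$ by the resulting pointwise lower bound on $|X|^{1/2}$, and finish with a mean-value estimate for $|r-R_0|$. The paper phrases the connectedness step via the cubic $f(t)=t(t-1/h)^2$ and the threshold $1/(3h)$ (the local maximum of $f$), whereas you use the explicit values $1/(2h)$ and $3/(4h)$; these are cosmetic variations of the same idea. One small point: remember to also impose $\e_0\leq\min(1,\b^2)$ so that Lemmas \ref{X-L2-estimate} and \ref{psi-L-infty-estimate} are applicable.
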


\begin{proof}
Consider the function $f(t):=t\(t-\frac{1}{h}\)^2$, which is increasing on $\left[0,\frac{1}{3h}\right]$ and $\left[\frac{1}{h},+\infty\right)$, and decreasing on $\left[\frac{1}{3h},\frac{1}{h}\right]$. We take $\e_0=\e_0(n,\b,A)>0$ sufficiently small such that
\begin{align*}
9C_2^2\e_0^\frac{1}{2n+1} \leq 1, \quad \text{and} \quad \e_0 \leq \min \(1,\b^2\).
\end{align*}
Then by using Lemma \ref{psi-L-infty-estimate} and $(\L_\e)$ with $\e\leq \e_0$, we see
$$
\|f(|X|)\|_\infty=\|\psi\|_\infty^2 \leq \frac{1}{9h^3}<\frac{4}{27h^3}=f\(\frac{1}{3h}\).
$$
By Lemma \ref{X-L2-estimate}, we have $\|X\|_2^2\geq \frac{n}{(n+1)h^2} \geq \frac{1}{4h^2}$. Hence there exists $x_0\in M$ satisfying $|X_{x_0}|\geq \frac{1}{2h}>\frac{1}{3h}$. By the connectedness of $M$, it follows that $|X|>\frac{1}{3h}$ on $M$. This implies
\begin{align*}
\left| |X|-\frac{1}{h} \right| \leq \frac{\sqrt{3}C_2(n,\b,A)}{h}\e^\frac{1}{2(2n+1)}=:\frac{C_3(n,\b,A)}{h}\e^\frac{1}{2(2n+1)}.
\end{align*}
Since $|X|\leq \sin R=\frac{1}{\sqrt{1+\b^2}}$ and $R_0=\arcsin\frac{1}{h}\geq \(1+\frac{1}{\b^2}\)^\frac{1}{2}\frac{1}{h}$, we get
\begin{align*}
\left| r-R_0 \right| \leq \(1+\frac{1}{\b^2}\)^\frac{1}{2}\frac{C_3(n,\b,A)}{h}\e^\frac{1}{2(2n+1)}\leq C_3(n,\b,A)\e^\frac{1}{2(2n+1)}R_0.
\end{align*}
\end{proof}

\section{Proof of the Main Theorem}
Let us consider the map
\begin{align*}
F:~~&M \ra     S\(p_0,R_0\), \\
  ~~&x \mapsto \exp_{p_0}\(R_0\frac{Y}{|Y|}\),
\end{align*}
where $Y:=\exp_{p_0}^{-1}(x)$ and $p_0$ is a center of gravity of $M$.

\begin{lem}\label{differential-of-F}
Let $u\in U_x M$ and $v=u-\langle u,\nabla r \rangle \-{\nabla}r$. We have
\begin{align}\label{Df-estimate}
|dF_x(u)|^2 =\frac{1}{h^2\sin^2 r}|v|^2.
\end{align}
Furthermore, we have
\begin{align}\label{Df-two-side-estimate}
\frac{1-\|\nabla r\|_\infty^2}{h^2 \sin^2 r}\leq |dF_x(u)|^2 \leq \frac{1}{h^2 \sin^2 r}.
\end{align}
\end{lem}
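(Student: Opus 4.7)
My plan is to compute $dF$ explicitly using the polar decomposition of the exponential map at $p_0$, and then translate the answer into the ambient/intrinsic gradient language of the paper.

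First, I would describe $F$ in polar coordinates centered at $p_0$. Identify $T_{p_0}\S^{n+1}$ with $\R^{n+1}$, and write any tangent vector as $(r,\omega)\in (0,\pi)\times\S^n$. Under these coordinates $F$ is simply radial rescaling: $F(\exp_{p_0}(r\omega))=\exp_{p_0}(R_0\omega)$. Thus $F$ kills the radial direction and only acts on the angular part of a tangent vector. Explicitly, if I take a curve $c(s)$ in $M$ with $c(0)=x$, $\dot c(0)=u$, and write $c(s)=\exp_{p_0}(r(c(s))\omega(s))$, then $F\circ c(s)=\exp_{p_0}(R_0\omega(s))$, so $dF_x(u)$ is realised as the value at $t=R_0$ of the Jacobi field $J$ along the radial geodesic $\g(t)=\exp_{p_0}(t\omega(0))$ with $J(0)=0$ and the same initial angular derivative $\dot\omega(0)$ that produces $u$ (in the $v$-direction) at $t=r(x)$.

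Second, on $\S^{n+1}$ the normal Jacobi fields along a geodesic through $p_0$ are exactly $\sin(t)$ times parallel transports of vectors in $\omega^\perp$. Therefore the unique Jacobi field with $J(0)=0$ realizing the orthogonal part $v=u-\langle u,\-\nabla r\rangle\-\nabla r$ at $t=r(x)$ satisfies $|J(t)|=\frac{\sin t}{\sin r}|v|$, so
\begin{align*}
|dF_x(u)|^2=|J(R_0)|^2=\frac{\sin^2 R_0}{\sin^2 r}|v|^2=\frac{1}{h^2\sin^2 r}|v|^2,
\end{align*}
using $\sin R_0=1/h$. This establishes (\ref{Df-estimate}).

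Third, for the two-sided estimate (\ref{Df-two-side-estimate}), I unit-normalise $u\in U_xM$ so $|u|=1$. Since $u\in T_xM\perp \nu$, I have $\langle u,\-\nabla r\rangle=\langle u,\nabla r\rangle$, where $\nabla r$ is the tangential projection of $\-\nabla r$ onto $T_xM$. Hence
\begin{align*}
|v|^2=1-\langle u,\nabla r\rangle^2,
\end{align*}
and by Cauchy--Schwarz, $1-|\nabla r|^2\leq |v|^2\leq 1$. Combining with the formula above and bounding $|\nabla r|\leq\|\nabla r\|_\infty$ gives (\ref{Df-two-side-estimate}). The only subtlety is the standard Jacobi-field identification in Step~2; everything else is linear algebra on $T_x\S^{n+1}=T_xM\oplus\R\nu$, so I expect no real obstacle.
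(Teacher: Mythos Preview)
Your argument is correct. The paper itself does not actually prove equality (\ref{Df-estimate}) but refers to \cite{Grosjean-Roth2012}, and for (\ref{Df-two-side-estimate}) it only says ``by the construction of $v$''; your Jacobi-field computation in polar coordinates about $p_0$ is exactly the standard argument behind that citation, and your derivation of the two-sided bound from $|v|^2=1-\langle u,\nabla r\rangle^2$ together with Cauchy--Schwarz is precisely what ``by construction of $v$'' amounts to.
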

\begin{proof}
The proof of equality (\ref{Df-estimate}) can be found in \cite{Grosjean-Roth2012}. By the construction of $v$, we immediately obtain (\ref{Df-two-side-estimate}).
\end{proof}

\begin{lem}\label{Xt-L-infty-estimate}
Let $q>n$ and $A>0$ be some real numbers such that $|M|^\frac{1}{n}(1+\|B\|_q) \leq A$. If $(\Lambda_\e)$ holds with $\e\leq \min\(1,\b^2\)$, then there exists a positive constant $C_4=C_4(q,n,\b,A)$ such that
\begin{align}\label{L-infty-estimate-Xt}
\|X^\top\|_\infty \leq \frac{C_4\e^\frac{1}{2(1+\g)}}{h},
\end{align}
where $\g=\frac{qn}{q-n}$. Moreover, $C_4(q,n,\b,A)\ra \infty$ as $q\ra n$ or $\b \ra 0$.
\end{lem}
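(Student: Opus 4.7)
The plan is to upgrade the $L^2$-bound on $X^\top$ from Lemma \ref{Xt-L2-estimate} to an $L^\infty$-bound by a Nash--Moser iteration based on the Sobolev inequality of Lemma \ref{Sobolev-spherical}. The main new feature compared with Lemma \ref{X-L-infty-estimate} is that differentiating $|X^\top|^2$ produces the second fundamental form $B$, so the iteration has to absorb $B$ via H\"older's inequality against $\|B\|_q$; the exponent $\g=\frac{qn}{q-n}$ will appear naturally as the Moser--De Giorgi exponent associated with $L^q$-controlled coefficients.

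First I would derive the pointwise identity
\begin{align*}
\nabla_Y X^\top=\cos r\cdot Y+\langle X,\nu\rangle A Y,\qquad Y\in TM,
\end{align*}
obtained by splitting $\-\nabla_Y X=\cos r\cdot Y$ (since $X=\sin r\-\nabla r$ is a conformal vector field on $\S^{n+1}$) into tangential and normal parts via Gauss--Weingarten. From this one gets
\begin{align*}
\bigl|\nabla |X^\top|\bigr|\leq \cos r+|X|\,|B|\leq 1+|X|\,|B|,
\end{align*}
and hence, for any $\a\geq 1$,
\begin{align*}
\bigl|d|X^\top|^{2\a}\bigr|\leq 2\a\,|X^\top|^{2\a-1}\bigl(1+|X|\,|B|\bigr).
\end{align*}
Plugging $f=|X^\top|^{2\a}$ into \eqref{Sobolev-spherical-inequality} and using $\|X\|_\infty\leq C_1/h$ from Lemma \ref{X-L-infty-estimate} together with H\"older's inequality with exponents $(q,\tfrac{q}{q-1})$ to handle the $|X|\,|B|\,|X^\top|^{2\a-1}$ term yields the basic recursion
\begin{align*}
\|X^\top\|_{\frac{2\a n}{n-1}}^{2\a}\leq C\,|M|^{1/n}\bigl(\a+h\bigr)\Bigl(1+\|B\|_q\Bigr)\|X^\top\|_{(2\a-1)\frac{q}{q-1}}^{2\a-1},
\end{align*}
where the right-hand side is now controlled uniformly by $A$.

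Next I would set up the Moser iteration with the twisted dilation $\mu:=\tfrac{n}{n-1}\cdot\tfrac{q-1}{q}$ and a sequence $a_{p+1}$ chosen so that $\tfrac{2a_{p+1}n}{n-1}=(2a_p-1)\tfrac{q}{q-1}\cdot(\text{shift})$; this is a routine but slightly annoying bookkeeping that produces an iteration convergent iff $\mu>1$, i.e.\ iff $q>n$, with limiting power $\g=\tfrac{qn}{q-n}$. Iterating from $a_0$ with initial $L^{p_0}$-norm $\|X^\top\|_{p_0}$ (for a suitable small $p_0\geq 2$) and using the geometric-series-type product of constants bounded via $|M|^{1/n}h\leq\sqrt 2 A$ gives an estimate of the form
\begin{align*}
\|X^\top\|_\infty\leq \frac{C(q,n,A)}{h^{1-\theta}}\|X^\top\|_2^{\theta},\qquad \theta=\frac{1}{1+\g}.
\end{align*}
Finally, I insert the $L^2$-bound from Lemma \ref{Xt-L2-estimate} combined with Lemma \ref{X-L2-estimate}, which gives $\|X^\top\|_2\leq \sqrt{\e}/(\b h)\cdot C$, to conclude
\begin{align*}
\|X^\top\|_\infty\leq \frac{C_4(q,n,\b,A)\,\e^{\theta/2}}{h}=\frac{C_4\,\e^{\frac{1}{2(1+\g)}}}{h},
\end{align*}
with $C_4\to\infty$ as $q\downarrow n$ (the iteration degenerates) or $\b\downarrow 0$ (through the $L^2$-input).

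I expect the main obstacle to be the bookkeeping for the recursion $a_{p+1}=\mu(a_p+\text{const})$: one must carefully track both the power of $\|X^\top\|_{a_p}$ and the product of constants, check that the factor $(\a+h)$ is absorbed by $h$ (using $\a\leq$ polynomial in $p$ and geometric decay in the $\nu^{-p}$ exponents), and finally verify that the geometric series $\sum_p (\mu^{-p}\log a_p)$ converges precisely under $q>n$, so that $C_4$ blows up as $q\to n$ through the infinite product.
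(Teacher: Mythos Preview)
Your proposal is correct and follows the same Moser iteration scheme as the paper, but your derivation of the pointwise gradient bound for $|X^\top|$ is genuinely cleaner. The paper writes $\chi=|X^\top|=\sin r\,|\nabla r|$, differentiates, and then controls $\bigl|d|\nabla r|\bigr|$ by splitting $\-\nabla r$ along $\nu$ and invoking the Hessian comparison $\-\nabla dr\leq \cot r\,(can-dr\otimes dr)$; this produces an awkward $|\nabla r|^{-1}$ factor that is later absorbed by dropping from $\chi^{2\a-1}$ to $\chi^{2\a-2}$. Your route, using the conformal identity $\-\nabla_Y X=\cos r\,Y$ and Gauss--Weingarten to get $\nabla_Y X^\top=\cos r\,Y+\langle X,\nu\rangle A Y$ directly, avoids the Hessian comparison entirely and gives the sharper bound $\bigl|\nabla|X^\top|\bigr|\leq \cos r+|X|\,|B|$ in one line. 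After that, both arguments run the same iteration with step ratio $\nu=\frac{n(q-1)}{(n-1)q}>1$ for $q>n$.

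One bookkeeping point: with your recursion exactly as written, namely $\|\chi\|_{2\a n/(n-1)}^{2\a}\leq C\a\,\|\chi\|_{(2\a-1)q/(q-1)}^{2\a-1}$, the additive shift in $a_{p+1}=\nu a_p+c$ is $c=\tfrac{n}{n-1}$ rather than $\tfrac{2n}{n-1}$, so $a_p/\nu^p\to 2+\g$ and you would obtain $\theta=\tfrac{2}{2+\g}$, not $\tfrac{1}{1+\g}$. This is actually a \emph{better} power of $\e$, but it does not match the stated exponent in the lemma. To land exactly on $\theta=\tfrac{1}{1+\g}$ you should, as the paper does, pull out one factor $\chi\leq\|X\|_\infty$ (and likewise bound $h\chi^{2\a}\leq h\|X\|_\infty^2\chi^{2\a-2}$) so that the right-hand side carries $\chi^{2\a-2}$; the extra $\|X\|_\infty^2$ factors then supply the $h^{-(1-\theta)}$ you wrote and the shift becomes $c=\tfrac{2n}{n-1}$.
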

\begin{proof}
Put $\chi=|X^\top|$. For any $\a\geq 1$, we have
$$
|d\chi^{2\a}|=2\a \chi^{2\a-1}\(\cos r|\nabla r|^2+\sin r\left|d|\nabla r|\right|\).
$$
Let us estimate $|d|\nabla r||$ at a point $x\in M$. For this, let $\left\{ e_i \right\}_{1\leq i \leq n}$ be an orthonormal basis of $T_x M$. We have
\begin{align*}
|d|\nabla r||^2=&\frac{1}{4|\nabla r|^2}|d\langle \-{\nabla}r,\nu \rangle^2|^2=\frac{\langle \-\nabla r,\nu \rangle^2}{|\nabla r|^2} \sum_{i=1}^{n}\(\-{\nabla}dr(e_i,\nu)+B(e_i,\-{\nabla}r)\)^2 \\
\leq &\frac{2}{|\nabla r|^2}\left[\sum_{i=1}^{n}\-{\nabla}dr(e_i,\nu)^2+|B|^2|\nabla r|^2 \right].
\end{align*}
Now $\sum_{i=1}^n \-{\nabla}dr(e_i,\nu)^2 \leq |\-\nabla dr|^2 \leq \sum_{i=1}^{n+1}\-{\nabla}dr(u_i,u_i)^2$, where $\{u_i\}_{1\leq i\leq n+1}$ is an orthonormal basis which diagonalizes $\-{\nabla}dr$. From the comparison theorem for the Hessian of distance function (see \cite{Petersen2006}, Theorem $27$), we deduce that
\begin{align*}
\sum_{i=1}^{n+1}\-{\nabla}dr(u_i,u_i)^2 \leq \(\frac{\cos r}{\sin r}\)^2 \sum_{i=1}^{n+1}|u_i-\langle u_i,\-{\nabla} r \rangle \-{\nabla} r|^2=n\(\frac{\cos r}{\sin r}\)^2.
\end{align*}
It follows that
\begin{align*}
|d|\nabla r||^2 \leq \frac{2n}{|\nabla r|^2}\(\frac{\cos r}{\sin r}\)^2+2|B|^2,
\end{align*}
and
\begin{align*}
|d\chi^{2\a}|\leq & 2\a \chi^{2\a-1}C(n)\left[\cos r+\frac{\cos r}{|\nabla r|}+\sin r|B| \right] \\
             \leq & 4\a \chi^{2\a-2}C(n)\|X\|_\infty\(1+|B|\|X\|_\infty\).
\end{align*}
A straightforward calculation gives
\begin{align*}
|M|^\frac{1}{n}h \leq \sqrt{2}|M|^\frac{1}{n}(1+\|H\|_2)\leq \sqrt{2}|M|^\frac{1}{n}(1+\|B\|_q).
\end{align*}
By using Lemma \ref{Sobolev-spherical}, we get for $\a\geq 1$
\begin{align*}
\|\chi\|_{\frac{2\a n}{n-1}}^{2\a}\leq &K(n)|M|^\frac{1}{n} \left[ 4\a C(n)\|X\|_\infty \|\chi\|_{2\a-2}^{2\a-2}\right. \\
&+4\a C(n)\|X\|_\infty^2\|B\|_q\|\chi\|_\frac{(2\a-2)q}{q-1}^{2\a-2}\left. +h\|\chi\|_{2\a}^{2\a}\right] \\
\leq &C'(n)\a \|X\|_\infty^2\left[|M|^\frac{1}{n}h+|M|^\frac{1}{n}\|B\|_q \right]\|\chi\|_\frac{(2\a-2)q}{q-1}^{2\a-2}  \\
\leq &C(n,A)\a\|X\|_\infty^2 \|\chi\|_\frac{(2\a-2)q}{q-1}^{2\a-2}.
\end{align*}
We take $\nu:=\frac{n(q-1)}{(n-1)q}$, $a_{p+1}:=a_p\nu+\frac{2n}{n-1}$, $a_0=2$ and $\a:=\frac{q-1}{2q}a_p+1$. Then $a_{p+1}=\frac{2\a n}{n-1}$ and
\begin{align*}
\|\chi\|_{a_{p+1}}^{\frac{a_{p+1}}{\nu^{p+1}}} \leq &\left[C(n,A)\|X\|_\infty^2 a_{p+1}\right]^\frac{n}{(n-1)\nu^{p+1}}\|\chi\|_{a_{p}}^{\frac{a_{p}}{\nu^{p}}} \\
\leq &\(\prod_{i=1}^{p+1}a_i^\frac{1}{\nu^i}\)^\frac{n}{n-1}\left[C(n,A)\|X\|_\infty^2\right]^{\frac{n}{n-1}\sum_{i=1}^{p+1}\frac{1}{\nu^{i}}}\|\chi\|_{a_{p}}^{\frac{a_{p}}{\nu^{p}}}.
\end{align*}
Note that $\frac{a_p}{\nu^p}$ converges to $a_0+\frac{2nq}{q-n}$. By Lemmas \ref{Xt-L2-estimate} and \ref{X-L-infty-estimate} we have
\begin{align*}
\|\chi\|_\infty \leq& C(q,n,A)\|X\|_\infty^\frac{\g}{1+\g}\|\chi\|_2^\frac{1}{1+\g} \\
\leq& C(q,n,A)\|X\|_\infty^\frac{\g}{1+\g}\(\frac{\e}{\b^2}\|X\|_2^2\)^\frac{1}{2(1+\g)} \\ =:&\frac{C_4(q,n,\b,A)}{h}\e^\frac{1}{2(1+\g)}.
\end{align*}
where $\g=\frac{qn}{q-n}$.
\end{proof}

\begin{proof}[Proof of Theorem \ref{main-theorem}]
Let $\e_0=\e_0(q,n,\b,A)>0$ such that
$$\max\left\{C_3\e_0^{\frac{1}{2(2n+1)}},C_4^2\e_0^\frac{1}{1+\g} \right\}\leq \frac{1}{16},\quad \text{and} \quad \e_0\leq \min\(1,\b^2\),$$
where $C_3=C_3(n,\b,A)$ and $C_4=C_4(q,n,\b,A)$ are the positive constants in Lemmas \ref{radius-estimate} and \ref{Xt-L-infty-estimate}.
If $(\Lambda_\e)$ holds with $\e\leq \e_0$, then we have
\begin{align*}
\frac{1}{h^2\sin^2 r}-1 \leq \frac{1}{\(1-C_3\e^\frac{1}{2(2n+1)}\)^2}-1\leq 8C_3\e^\frac{1}{2(2n+1)}.
\end{align*}
On the other hand, by Lemmas \ref{radius-estimate} and \ref{Xt-L-infty-estimate}, we get
\begin{align*}
\|\nabla r\|_\infty^2 \leq \frac{h^2\|X^\top\|_\infty^2}{\(1-C_3\e^\frac{1}{2(2n+1)}\)^2}
\leq 4C_4^2 \e^\frac{1}{1+\g},
\end{align*}
which gives
\begin{align*}
1-\frac{1-\|\nabla r\|_\infty^2}{h^2\sin^2 r} \leq 1-\frac{1-4C_4\e^\frac{1}{1+\g}}{(1+C_3\e^\frac{1}{2(2n+1)})^2}
\leq 3C_3\e^\frac{1}{2(2n+1)}+4C_4^2\e^\frac{1}{1+\g}.
\end{align*}
A direct computation shows
\begin{align*}
\left| |dF_x(u)|^2-1 \right| \leq 8\max \left\{C_3\e^\frac{1}{2(2n+1)},C_4^2\e^\frac{1}{1+\g} \right\}\leq \frac{1}{2}.
\end{align*}
Hence $F$ is a local diffeomorphism. Since $S\(p_0,R_0\)$ is simply connected for $n\geq 2$, then $F$ is a diffeomorphism.
Therefore, $Y$ is a bijection and $\phi$ is an embedding. By Lemmas \ref{radius-estimate} and \ref{Xt-L-infty-estimate}, we obtain
\begin{align*}
\min_{M}|X|\geq \frac{1-C_3\e^\frac{1}{2(2n+1)}}{h}> \frac{15}{16h},\quad \text{and} \quad
\max_{M}|X^\top| \leq \frac{C_4 \e^\frac{1}{2(1+\g)}}{h} < \frac{1}{4h},
\end{align*}
which implies that $\min_M |X^\bot|>0$. By the connectedness of $M$, $\langle X,\nu \rangle>0$ on $M$, which is equivalent to $\phi(M)$ is starshaped with respect to $p_0$. It remains to prove (\ref{almost-isometry}). A straightforward calculation gives
\begin{align*}
     &\left| d_2(F(x_1),F(x_2))-d_1(x_1,x_2)\right| \\
\leq & \max_{x\in M,u\in U_x M}\left| |dF_x(u)|^2-1 \right| d_1(x_1,x_2) \\
\leq & \sqrt{2}\max \left\{C_3(n,\b,A)\e^\frac{1}{2(2n+1)},C_4(q,n,\b,A)^2\e^\frac{1}{1+\g} \right\} \Diam\(S(p_0,R_0)\) \\
\leq & \frac{\sqrt{2}\pi}{h}\max \left\{C_3(n,\b,A)\e^\frac{1}{2(2n+1)},C_4(q,n,\b,A)^2\e^\frac{1}{1+\g} \right\}.
\end{align*}
This completes the proof of Theorem \ref{main-theorem}.
\end{proof}

\section{The Optimality of Radius Condition}\label{example}
In this section, we construct an example to show that the radius condition in Theorem \ref{main-theorem} is optimal.
In \cite{Muto-Ohnita-Urakawa1984}, Muto, Ohnita and Urakawa proved that for the great sphere $\S^n$ and the generalized Clifford torus $\S^{p}\(\sqrt{\frac{p}{n}}\)\times \S^{q}\(\sqrt{\frac{q}{n}}\)$ $(p+q=n)$ of $\S^{n+1}$, the first eigenvalue $\l_1$ is just $n$. We take $M$ be one of the generalized Clifford tori, and claim that every point of $\S^{n+1}$ can be chosen as a center of gravity of $M$. Since the radius of $M$ is exactly $\frac{\pi}{2}$, we may choose $p_0\in\S^{n+1}$ as a center of gravity of $M$ such that $\phi(M)\subset \-B(p_0,\frac{\pi}{2})$. However, $M$ is not homeomorphic to $\S^n$.
Now we prove the claim. It is equivalent to show that $\mathcal{E}$ is constant on $\S^{n+1}$. We fix $p\in \left\{1,\cdots,[\frac{n}{2}]\right\}$ and set $q=n-p$.
$$M=\S^{p}(r_1) \times \S^{q}(r_2)  \overset{\phi}{\hookrightarrow} \S^{n+1} \overset{i}\hookrightarrow \R^{n+2},$$
where $r_1=\sqrt{\frac{p}{n}}$ and $r_2=\sqrt{\frac{q}{n}}$.

By the spherical coordinates $(\t_1,\cdots,\t_p,\b_1,\cdots,\b_q)$, we have
$$
\left\{\begin{aligned}
x_1=&r_1\sin \t_1 \cos \t_2 \cdots \cos \t_{p-1} \cos \t_p, \\
x_2=&r_1\sin \t_1 \cos \t_2 \cdots \cos \t_{p-1} \sin \t_p, \\
&\quad \quad\vdots \\
x_{p}=&r_1\sin \t_1 \sin \t_2, \\
x_{p+1}=&r_1\cos \t_1,\\
x_{p+2}=&r_2\sin \b_1 \cos \b_2 \cdots \cos \b_{q-1} \cos \b_q, \\
x_{p+3}=&r_2\sin \b_1 \cos \b_2 \cdots \cos \b_{q-1} \sin \b_q, \\
&\quad \quad\vdots \\
x_{p+q+1}=&r_2\sin \b_1 \sin \b_2,  \\
x_{p+q+2}=&r_2\cos \b_1,
\end{aligned}
\right.
$$
where $\t_1,\b_1\in [0,\pi)$ and $\t_2,\cdots,\t_p,\b_2,\cdots,\b_q\in [0,2\pi)$. A direct computation gives
\begin{align*}
\sqrt{\det{g}}=r_1^p r_2^q \(\sin \t_1\)^{p-1}\(\sin \b_1\)^{q-1} \prod_{j=2}^{p-1}\(\cos \t_j\)^{p-j} \prod_{k=2}^{q-1}\(\cos \b_k\)^{q-k}.
\end{align*}
For any point $p=\(p_1,\cdots, p_{n+2}\)\in \S^{n+1} \hookrightarrow \R^{n+2}$, we have
\begin{align*}
\mathcal{E}(p)=&\int_M \left[ 1-\cos d(p,x)\right] dv(x) =\int_M \(1-\sum_{i=1}^{n+2}p_i x_i\) dv(x) \\
=&\int_{[0,\pi)^2 \times [0,2\pi)^{n-2}}\(1-\sum_{i=1}^{n+2}p_i x_i \) \sqrt{\det g}d\t_1 d\b_1  d\t_2\cdots d\t_{p} d\b_2 \cdots d\b_q \\
=&|M|.
\end{align*}
Therefore, $\mathcal{E}$ is constant on $\S^{n+1}$.

\end{document}